\numberwithin{equation}{section}
\numberwithin{figure}{section}
\theoremstyle{plain}
\newtheorem{thm}{\protect\theoremname}[section]
\theoremstyle{plain}
\newtheorem{lem}[thm]{\protect\lemmaname}
\theoremstyle{definition}
\newtheorem{defn}[thm]{\protect\definitionname}
\theoremstyle{plain}
\newtheorem{cor}[thm]{\protect\corollaryname}
\theoremstyle{remark}
\newtheorem{rem}[thm]{\protect\remarkname}
\providecommand{\corollaryname}{Corollary}
\providecommand{\definitionname}{Definition}
\providecommand{\lemmaname}{Lemma}
\providecommand{\remarkname}{Remark}
\providecommand{\theoremname}{Theorem}
\global\long\def\abs#1{\left|#1\right|}%
\global\long\def\set#1#2{\left\{  \vphantom{#1}\vphantom{#2}#1\right.\left|\ #2\vphantom{#1}\vphantom{#2}\right\}  }%
\global\long\def\parenth#1{\left(#1\right)}%
\global\long\def\N{\mathbb{N}}%
\global\long\def\R{\mathbb{R}}%
\global\long\def\cal#1{\mathcal{#1}}%
\global\long\def\codecenter{\textup{center}}%
\global\long\def\petals{\textup{petals}}%
\global\long\def\radii{\textup{radii}}%
\global\long\def\coronas{\textup{coronas}}%
\global\long\def\code{\textup{code}}%
\global\long\def\ang{\textup{angles}}%
\global\long\def\len{\textup{length}}%
\global\long\def\codes{\mathfrak{C}}%
\global\long\def\angles{\mathfrak{A}}%
\newcommand{\papertitle}{The number of configurations of radii that can occur in compact packings of the plane with discs of n sizes is finite}
\newcommand{\shortpapertitle}{Configurations of radii that can occur in compact disc packings}
\newcommand{\paperabstract}{%
By a compact packing of the plane by discs, $P$, we mean a collection
of closed discs in the plane with pairwise disjoint interior so that,
for every disc $C\in P$, there exists a sequence of discs $D_{0},\ldots,D_{m-1}\in P$
so that each $D_{i}$ is tangent to both $C$ and $D_{i+1\mod m}.$ 

We prove, for every $n\in\N$, that there exist only finitely many
tuples $(r_{0},r_{1},\ldots,r_{n-1})\in\R^{n}$ with $0<r_{0}<r_{1}\ldots<r_{n-1}=1$
that can occur as the radii of the discs in any compact packing of
the plane with $n$ distinct sizes of disc.
}
\newcommand{\miekname}{Miek Messerschmidt}
\newcommand{\miekemail}{miek.messerschmidt@up.ac.za}
\newcommand{\miekaffil}{%
Department of Mathematics and Applied Mathematics, University of Pretoria,
Private bag X20,
Hatfield 0028,
South Africa}
\newcommand{\primarymsccodeA}{05B40} 
\newcommand{\secondarymsccodeA}{52C26} 
\newcommand{\secondarymsccodeB}{52C15} 
\newcommand{\paperkeywords}{Circle packing, Compact packing}
\begin{document}

\title[\shortpapertitle]{\papertitle}
\author{\miekname}
\address{\miekaffil}
\email{\miekemail}

\subjclass[2010]{Primary: \primarymsccodeA, Secondary: \secondarymsccodeA, \secondarymsccodeB}
\keywords{\paperkeywords}
\begin{abstract}\paperabstract\end{abstract}
\maketitle

\section{Introduction}

For any set of discs $Q$ in the plane, with $D\in Q$ and $r_{D}\in\R$
denoting the radius of a disc $D\in Q$, we define $\radii(Q):=\{r_{D}:D\in Q\}$.
With $m\in\N$ by a \emph{corona} we mean a collection of closed discs
in the plane $C,D_{0},D_{1},\ldots,D_{m-1}$ that have pairwise disjoint
interiors and are labeled in such a way so that for every $i\in\{0,\ldots,m-1\}$
the disc $D_{i}$ is tangent to both the discs $C$ and $D_{i+1\mod m}.$
We call $C$ \emph{the center }of the corona and $D_{0},D_{1},\ldots,D_{m-1}$
\emph{the petals }of the corona. 

By a \emph{compact packing (of the plane by discs)} \texttt{$P$}
we mean a collection of closed discs in the plane with pairwise disjoint
interiors, so that every disc $C\in P$ is the center of a corona
with petals from $P$ (e.g., Figure~\ref{fig:example0}). The set
of all maximal (with respect to inclusion) coronas that occur in a
compact packing $P$ is denoted by $\coronas(P).$ 

\begin{figure}
\includegraphics[width=1\textwidth]{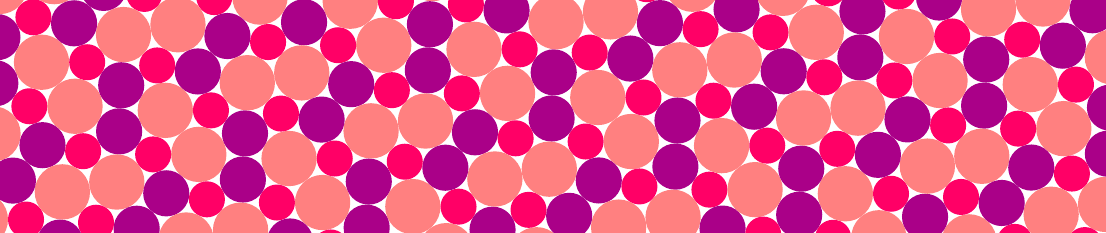}\caption{\label{fig:example0}An example of a compact $3$-packing \cite[Appendix~A. Example (53)]{Fernique2021}.}
\end{figure}

With $P$ a compact packing, in this paper, we restrict ourselves
always to the case $\abs{\radii(P)}<\infty$. For $n\in\N$, we will
say $P$ is a \emph{compact $n$-packing} if $\abs{\radii(P)}=n$.
We define the set
\[
\Pi_{n}:=\set{\radii(P)}{\begin{array}{c}
P\text{ a compact \ensuremath{n}-packing with }\\
\max\,\radii(P)=1
\end{array}}.
\]
In this paper we prove the following result:
\begin{thm}
\label{thm:main-thm}For all $n\in\N$, the set $\Pi_{n}$ is finite.
\end{thm}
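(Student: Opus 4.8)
\section{Outline of the proof of Theorem~\ref{thm:main-thm}}

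\textbf{The angle equation.} The plan rests on the elementary fact that a corona is governed by one scalar equation. If a disc of radius $R$ has two petals of radii $a$ and $b$ tangent to it and to one another, then the law of cosines applied to the triangle of the three centres gives the angle subtended at the centre of the $R$-disc by the two petal centres as
\[
\alpha(R;a,b)\;=\;\arccos\!\parenth{1-\frac{2ab}{(R+a)(R+b)}}.
\]
Hence a cyclic word of petal radii $(a_{0},\dots,a_{m-1})$ is realised by a corona around an $R$-disc exactly when the resulting petals have pairwise disjoint interiors and $\sum_{k}\alpha\parenth{R;a_{k},a_{k+1\bmod m}}=2\pi$. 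I will use that $\alpha$ is strictly increasing in $a$ and in $b$, strictly decreasing in $R$, scale invariant ($\alpha(\lambda R;\lambda a,\lambda b)=\alpha(R;a,b)$), and that $\alpha(R;R,R)=\pi/3$. In particular, every petal around a disc of the smallest radius $r_{0}$ has radius at least $r_{0}$, so each such corona angle is at least $\pi/3$ and that corona has at most $6$ petals; similarly a maximal run of consecutive petals of radius $r_{0}$ inside any corona around an $R$-disc has at most $1+2\pi/\alpha(R;r_{0},r_{0})$ terms.

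\textbf{Reducing to finitely many corona codes.} Call the \emph{code} of a corona the pair formed by the radius of its centre and the cyclic word of the radii of its petals, and for $\rho=(r_{0},\dots,r_{n-1})$ with $r_{n-1}=1$ write $\codes(\rho)$ for the set of codes of coronas occurring in \emph{some} compact $n$-packing with radii $\rho$. The core claim is that there is a constant $M_{n}$, depending only on $n$, bounding the length of every code in $\codes(\rho)$ for all $\rho\in\Pi_{n}$; equivalently, that a uniform lower bound $r_{0}\ge c_{n}>0$ holds on $\Pi_{n}$, for then each corona angle is at least $\alpha(1;c_{n},c_{n})>0$ and every corona has at most $2\pi/\alpha(1;c_{n},c_{n})$ petals. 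Granting this, only finitely many codes occur in total, hence only finitely many sets $\codes(\rho)$ are possible. To obtain the lower bound on $r_{0}$ I would induct, peeling off the smallest discs: a disc of radius $r_{0}$ whose corona has a petal of some larger radius $r_{j}$ satisfies, by Step~1, a genuine analytic equation with at most $6$ summands that confines $r_{0}/r_{j}$ to a finite set, whereas a disc of radius $r_{0}$ all of whose coronas are all-$r_{0}$ lies in the interior of a hexagonal patch and, following that patch to its boundary, one reaches a disc of the previous kind. This is the role of deleting the smallest petals from an over-long corona and then \emph{reducing} the remaining word, keeping track of how the angle sum changes so as to feed the truncated codes back into the angle equations.

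\textbf{Rigidity.} It then suffices to show that $\rho\mapsto\codes(\rho)$ is injective on $\Pi_{n}$. Suppose $\rho,\rho'\in\Pi_{n}$ with $\codes(\rho)=\codes(\rho')$ and let the index $i_{0}$ maximise $t:=r'_{i_{0}}/r_{i_{0}}$; since $r_{n-1}=r'_{n-1}=1$ we have $t\ge 1$. Some corona of the $r_{i_{0}}$-disc has a code $c\in\codes(\rho)=\codes(\rho')$, so $c$ is also realised over $\rho'$, with centre radius $r'_{i_{0}}=t\,r_{i_{0}}$ and petal radii $r'_{a_{k}}\le t\,r_{a_{k}}$. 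Scaling the $\rho$-equation for $c$ by $t$ and comparing with the $\rho'$-equation for $c$, scale invariance and monotonicity of $\alpha$ give
\[
2\pi=\sum_{k}\alpha\parenth{t r_{i_{0}};t r_{a_{k}},t r_{a_{k+1}}}\;\ge\;\sum_{k}\alpha\parenth{t r_{i_{0}};r'_{a_{k}},r'_{a_{k+1}}}=2\pi,
\]
so every summand is an equality, and strict monotonicity in the petal radii forces $r'_{a_{k}}=t\,r_{a_{k}}$ for every petal index of $c$. Thus the set $A$ of indices with ratio $t$ is nonempty and closed under passing from the centre of a code in $\codes(\rho)$ to its petals; using that a compact packing is connected, $A$ is all of $\{0,\dots,n-1\}$, whence $\rho'=t\rho$ and then $r'_{n-1}=r_{n-1}=1$ gives $t=1$ and $\rho=\rho'$.

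\textbf{The main obstacle.} The genuine difficulty is Step~2: the elementary angle bounds only control a corona through the ratio $r_{0}/R$, which is precisely what one wants to bound, so the lower bound $r_{0}\ge c_{n}$ must be wrung out of an induction that repeatedly removes the smallest discs, understands the angle sum of a reduced code, and closes the loop against the finitely many angle equations having few summands. I also expect to need the auxiliary geometric facts that non-consecutive petals of a corona cannot overlap and that the tangency graph of a compact packing is connected (used in Step~3).
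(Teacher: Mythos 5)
Your overall architecture --- (i) reduce to finitely many corona codes, (ii) show the codes determine the radii --- is exactly the paper's, and Steps 1 and 3 are essentially sound: your rigidity argument (maximise $t=r'_{i}/r_{i}$, use scale invariance and strict monotonicity of the corona angle in the petal radii to propagate the equality $r'_{i}=t\,r_{i}$ from a centre to its petals, then use connectedness and the normalisation $r_{n-1}=1$) is the same mechanism as the paper's injectivity theorem for $\rho\mapsto(\alpha(x)|_{\rho})_{x}$ on fundamental sets. The gap is in Step 2, and it is \emph{the} gap. The claim that the corona equation of an $r_{0}$-disc ``confines $r_{0}/r_{j}$ to a finite set'' is false for $n\geq3$: that equation is a single analytic equation in the up to $n-1$ unknown ratios $r_{1}/r_{0},\dots,r_{n-1}/r_{0}$, so its solution set is generically a positive-dimensional hypersurface; Figure~\ref{fig:two-coronas-same-code} exhibits a continuum of non-similar coronas all realising the code $1{:}2000000$. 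No single corona --- and in particular no corona of the smallest disc, whose length is trivially at most $6$ anyway --- bounds $r_{0}$ away from $0$. Your ``peel off the smallest discs'' induction is also not actually set up: you do not say what quantity the induction runs on, nor why an inductive hypothesis would produce a uniform bound, and the hexagonal-patch digression does not address coronas of the \emph{intermediate} discs, which are where the unbounded lengths could occur.

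What is actually needed here is an induction on the number of sizes $n$ that \emph{collapses the large symbols} rather than deleting the small ones: replacing every symbol $\geq m-1$ by $m-1$ turns codes over $\{0,\dots,n-1\}$ into codes over $\{0,\dots,m-1\}$, but the collapsed codes need no longer come from any packing (their angle sums only satisfy $\alpha(x)|_{\rho}\leq2\pi$ under the original radii), so the induction cannot run over $\Pi_{m}$ itself. One must enlarge the class to families of codes admitting two monotone realizers $\rho,\sigma$ with $\alpha(x)|_{\rho}\leq2\pi\leq\alpha(x)|_{\sigma}$ (the paper's $n$-essential sets), prove a comparison lemma stating that any such pair satisfies $\sigma(n-2)/\sigma(n-1)\leq\rho(n-2)/\rho(n-1)$ (the Bootstrapping Lemma), and then use the inductively known finiteness of the collection of $m$-essential sets for $m<n$ to extract constants $K_{m-2}>0$ with $K_{m-2}\leq r_{m-2}/r_{m-1}$; chaining these over $m=2,\dots,n-1$ bounds $r_{0}/r_{n-2}$ from below, hence every angle, hence every code length, which closes the induction. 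This detour through inequalities and the auxiliary combinatorial class is precisely the idea your sketch is missing, and without it Step 2 does not go through.
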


Before discussing our approach in this paper toward proving Theorem~\ref{thm:main-thm},
we first make some brief historical remarks. 

Since the hexagonal packing is the only compact packing with one size
of disc we have $\abs{\Pi_{1}}=1$. This was likely known to the ancients.
The equality $\abs{\Pi_{2}}=9$ was only established in 2006 by Kennedy
in \cite{Kennedy2006}. Of the nine elements in $\Pi_{2}$, some were
known earlier, with seven appearing in Fejes Toth's 1964 book \cite[p.185--187]{Toth1964}
and a further one appearing in 1993 in \cite{Likos1993}. 

Determining whether or not $\abs{\Pi_{3}}$ is finite and computing
its value took a number of years after publication of \cite{Kennedy2006}
and proved to be a tantalizing problem in its own right. The bound
$\abs{\Pi_{3}}\leq13617$ was established in \cite{Messerschmidt2020}.
Working roughly at the same time, but independently from the author,
a better approach was developed by Fernique, Hashemi and Sizova who
in \cite{Fernique2021} showed that $\abs{\Pi_{3}}=164$. Based on
the values of $\abs{\Pi_{1}},\abs{\Pi_{2}}$ and $\abs{\Pi_{3}}$,
Connelly and Pierre made the conjecture in a previous version of the
preprint \cite{Connelly2021} that the sequence $(\abs{\Pi_{n}})$
is the OEIS sequence A086759 \cite{oeis}.

Analogous work, studying compact packings of three dimensional space
with spheres of two and three sizes, was performed by Fernique respectively
in \cite{FerniqueTwoSpheres2021} and \cite{FerniqueThreeSizes}.
Surprisingly, the situation in three dimensions is not quite as rich
as the two dimensional case (at least for two and three sizes of spheres),
in that all the compact packings of three dimensional space with two
and three sizes of spheres arise through merely filling the interstitial
holes in a close-packing of spheres with unit radius. This is not
the case for the compact packings of discs of the plane with two and
three sizes of discs as is evident in \cite{Kennedy2006} and \cite{Fernique2021}. 

\medskip

As the ideas in the current paper are partially informed by those
employed in analyzing $\Pi_{3}$, we briefly describe the process
involved in establishing $\abs{\Pi_{3}}<\infty$ along with some of
the difficulties that arise when analyzing $\Pi_{3}$ that do not
exist in determining $\Pi_{2}$. 

Define 
\[
\cal P_{3}:=\set P{P\text{ a compact \ensuremath{3}-packing with }\max\:\radii(P)=1}.
\]
 As there occur exactly three sizes of discs in any packing $P\in\cal P_{3}$,
for convenience we refer to the disc sizes as \emph{small}, \emph{medium
}and \emph{large}. To any corona one may associate an abstract \emph{coronal
code} (see Sections~\ref{subsec:Coronal-codes} and~\ref{subsec:Canonical-labeling}
for exact definitions). The length of the code determined by a corona
is exactly equal to the length of the sequence of petals in the corona.
Mimicking Kennedy's argument from \cite{Kennedy2006}, it is not difficult
to see that for any compact $3$-packing $P\in\cal P_{3}$, all codes
determined by coronas in $P$ with small centers have a universal
bound of six on their length. 

A first difficulty arises: That of determining a universal bound on
the length of codes associated to any coronas with medium centers
in any compact $3$-packing $P\in\cal P_{3}$. Failing this, there
\emph{could }exist infinitely many coronas with medium centers that
occur across compact $3$-packings in $\cal P_{3}$, all with distinct
associated coronal codes. This would imply $\abs{\Pi_{3}}=\infty$.
A crucial observation in showing that this is not the case is the
result \cite[Lemma~6.1]{Fernique2021} paraphrased here: 
\begin{lem}
\label{lem:Fernique-lemma}There exists a universal constant $K>0$
so that, for any compact $3$-packing $P$ with $\radii(P)=\{r_{0},r_{1},r_{2}\}$
satisfying $0<r_{0}<r_{1}<r_{2}=1$, we have 
\[
0<K\leq\frac{r_{0}}{r_{1}}.
\]
The largest value of $K$ can take on is $\min\{r:\{r,1\}\in\Pi_{2}\}.$
\end{lem}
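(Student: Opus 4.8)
The plan is to turn the desired inequality into a statement about $\Pi_{2}$: I would argue by contradiction, showing that a compact $3$-packing whose two smallest radii stand in too extreme a ratio lets one manufacture a compact $2$-packing of an equally extreme ratio, which the finiteness — indeed the explicit classification — of $\Pi_{2}$ forbids; the optimal value of $K$ should then be read off from the extremal element of $\Pi_{2}$, with near-extremal $3$-packings degenerating onto the extremal $2$-packing.

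First the angular bookkeeping. For $r,a,b>0$ let $\alpha(r;a,b)$ be the angle at the centre of a disc of radius $r$ subtended by its two points of tangency with two mutually tangent discs of radii $a$ and $b$ each externally tangent to it; the law of cosines in the triangle of centres (sides $r+a$, $r+b$, $a+b$) gives a closed form, from which $\alpha$ is continuous, strictly increasing in each petal radius, strictly decreasing in $r$, bounded above by $\pi$, and satisfies $\alpha(r;a,b)\le\arccos\bigl(\tfrac{r-a}{r+a}\bigr)$ for every $b>0$; in particular $\alpha(r;a,b)\to0$ as $a/r\to0$, uniformly in $b$. Since a corona with centre radius $r$ and cyclically listed petal radii $s_{0},\dots,s_{m-1}$ encircles its centre, $\sum_{i}\alpha(r;s_{i},s_{i+1})=2\pi$, so $m\ge3$ and the smallest of these angles is at most $2\pi/3$. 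Feeding in $\cos(2\pi/3)=-\tfrac12$ and the bound above shows that a disc cannot be encircled by a corona all of whose petals have radius more than $(2\sqrt3+3)$ times its own; hence when $r_{0}/r_{1}$ is small every small disc has a small petal, so the small discs already carry a non-trivial contact graph inside the packing. To extract a $2$-packing I would take a sequence of compact $3$-packings $P_{k}$ with $\radii(P_{k})=\{r_{0}^{(k)},r_{1}^{(k)},1\}$ and $r_{0}^{(k)}/r_{1}^{(k)}\to0$, rescale $P_{k}$ so that the \emph{small} radius becomes $1$ (then the medium radius $\to\infty$ and the large radius is larger still), and pass, by a Blaschke-type selection on compact subsets of $\R^{2}$, to a limit in which the originally small discs converge to a unit-disc packing while every medium and large disc within bounded distance recedes to a half-plane tangent to the unit discs it touched. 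One then hopes to recognise this limit as (locally) a compact $2$-packing whose ratio, being the limit of $r_{0}^{(k)}/r_{1}^{(k)}$, is $0$, contradicting $|\Pi_{2}|<\infty$; tracking numerically where $r_{0}^{(k)}/r_{1}^{(k)}$ sits relative to $r^{*}:=\min\{r:\{r,1\}\in\Pi_{2}\}$ throughout the same argument is what should deliver that $K=r^{*}$ is simultaneously admissible and best possible.

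The step I expect to be the real obstacle is precisely the passage from the $3$-packing to a \emph{genuine} compact $2$-packing. Shrinking or deleting the small discs is harmless, but it is the medium and large discs that surround the small ones in a crowded fashion, so one cannot simply restrict to the sub-packing of the two smaller sizes: deleting the large discs destroys coronas, and in the limit the medium and large discs do not vanish but turn into half-planes, so the limiting object is a disc packing of a region cut out by lines rather than of the whole plane, whose "ratio" is not literally an element of $\Pi_{2}$. (Worse, a unit disc can legitimately be encircled by half-planes and unit discs together, so one cannot hope to exclude half-planes outright.) Closing this gap seems to require either a finite surgery that excises the large discs while repairing the disturbed coronas at a cost that can be bounded in terms of $r_{0}/r_{1}$, or a sharpening of the limit argument forcing the bounding half-planes off to infinity so that the surviving structure is an honest compact $2$-packing; in either case the finiteness and explicit classification of $\Pi_{2}$ must be invoked at exactly this point, both to rule out the limiting configuration and to pin the threshold to $r^{*}$.
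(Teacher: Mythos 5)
There is a genuine gap, and it sits exactly where you flag it: the passage from a degenerating sequence of compact $3$-packings to an honest compact $2$-packing. After rescaling so that the small radius is $1$, the medium and large discs converge to half-planes, so the Blaschke limit is a packing of a region bounded by lines rather than of the whole plane; its ``ratio'' is not an element of $\Pi_{2}$, and neither of the two repairs you gesture at (a surgery excising the large discs, or forcing the half-planes off to infinity) is carried out or obviously carries through. Moreover, even if the contradiction could be completed, a sequential compactness argument of this kind would only produce \emph{some} $K>0$; it would not identify the optimal constant as $\min\{r:\{r,1\}\in\Pi_{2}\}$, which your proposal defers to ``tracking numerically.''

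The paper's proof is entirely local and avoids limits. Fix a compact $3$-packing $P$ with canonical realizer $\rho$ and pick a corona of $P$ whose center is a small disc and which has at least one non-small petal; let $x\in\codes(\{0,1,2\})$ be its code, so $\alpha(x)|_{\rho}=2\pi$. Replace every petal symbol $2$ in $x$ by $1$ to obtain $y\in\codes(\{0,1\})$. Since realized angle symbols increase in the petal radii, $\alpha(y)|_{\rho}\leq\alpha(x)|_{\rho}=2\pi$, while $y$ is also the code of a genuine two-size corona $Q'$ with small center, whose own realizer $\sigma_{Q'}$ satisfies $\alpha(y)|_{\sigma_{Q'}}=2\pi$. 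The Bootstrapping Lemma (Lemma~\ref{lem:bootstrap}), applied to the fundamental set $\{y\}$, converts $\alpha(y)|_{\rho}\leq2\pi\leq\alpha(y)|_{\sigma_{Q'}}$ into $\sigma_{Q'}(0)/\sigma_{Q'}(1)\leq\rho(0)/\rho(1)=r_{0}/r_{1}$; taking $K$ to be the minimum of $\sigma_{Q}(0)/\sigma_{Q}(1)$ over the finitely many small-centered two-size coronas $Q$ gives the universal bound and the stated optimal value. Your angular estimates (each angle at a small center is at least $\pi/3$, hence at most six petals, hence finitely many codes) are exactly what makes this finite minimum well-defined, so you have the right raw material; what is missing is the monotonicity/comparison mechanism that replaces the limit argument.
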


The proof of Lemma~\ref{lem:Fernique-lemma} proceeds through leveraging
knowledge of $\Pi_{2}$ to determine a universal lower bound $K$
for all possible ratios of small-to-medium radii that can occur in
any compact $3$-packing from $\cal P_{3}$. This is crucial in placing
a universal bound on the length of the coronal codes associated to
coronas with medium center that can occur across all compact $3$-packings.
This universal bound allows one to enumerate all of the finitely many
pairs of codes that can possibly be associated to coronas with small
and medium centers occurring in any compact $3$-packing from $\cal P_{3}$.
This idea returns in Section~\ref{sec:The-bootstrapping-theorem}
of the current paper where we prove a more general version of Lemma~\ref{lem:Fernique-lemma}
in Lemma~\ref{lem:bootstrap}.

A second difficulty now arises: There exist infinitely many coronas
containing discs of three different radii that are not congruent (modulo
rigid motions and scaling), but who nevertheless determine the same
coronal code (see Figure~\ref{fig:two-coronas-same-code}). By inflating
and deflating discs in a compact packing, one could thus conceivably
obtain different compact packings yet with the same combinatorics
(see for example \cite{ConnellyGortler} where such techniques are
used to continuously deform finite compact packings whose contact
graphs are related by edge flips into oneanother). For this reason,
there could exist infinitely many more elements in $\Pi_{3}$ than
the finite set all possible pairs of coronal codes that can be associated
to coronas with small and medium centers in any compact $3$-packing
from $\cal P_{3}$. One then needs to show that this is not the case:
In any compact $3$-packing $P\in\cal P_{3}$ there exists a pair
of coronas with small and medium centers so that their associated
coronal codes necessarily satisfy conditions (which we do not state
here) that uniquely determine the values in $\radii(P)$. This shows
that $\Pi_{3}$ has at most as many elements as pairs of codes that
satisfy the mentioned necessary conditions, of which there are only
finitely many. This idea also returns in the current paper (cf. Sections~\ref{sec:Fundamental-sets}
and~\ref{sec:unique-tight-realizations}).
\begin{figure}
\includegraphics[width=0.85\textwidth]{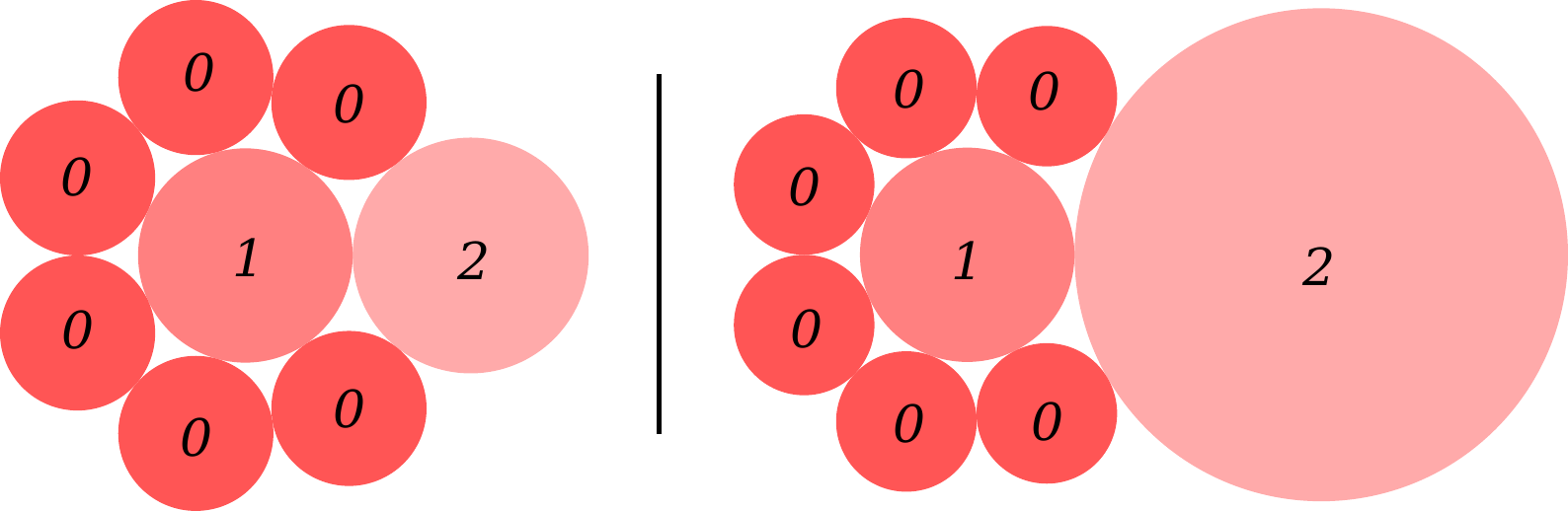}\caption{Two coronas with three sizes of discs that are not congruent (modulo
rigid motions and scaling) that determine the same coronal code 1:2000000.\label{fig:two-coronas-same-code}}
\end{figure}

The third and last difficulty arises in computing the exact values
of elements of $\Pi_{3}$. Elements of $\Pi_{3}$ are determined as
solutions to systems of trigonometric equations in two variables which
are obtained from the pairs of codes associated with coronas with
small and medium radii in some compact $3$-packing $P\in\cal P_{3}$.
Such systems of equations can be manipulated into systems of equations
containing radicals of rational expressions, and further into systems
of polynomial equations in two variables (cf. \cite[Section~8]{Messerschmidt2020}
and \cite[Sections~5 and~6]{Fernique2021}). The solutions of these
systems of polynomial equations can be determined as the roots of
single variable polynomials through computing appropriate Gr\"obner
bases of the occurring polynomials. Some of these computations can
become expensive (cf. \cite[Sections~3.5 and~3.6]{Fernique2021}).
As our methods in this paper do not require explicit computation of
elements in any of the sets $\Pi_{n}$ in order to prove Theorem~\ref{thm:main-thm},
the computational difficulties mentioned in this paragraph are not
encountered.

\medskip

We now turn to the approach taken in this paper to prove Theorem~\ref{thm:main-thm}.
The proof follows through strong induction and is somewhat informed
by the arguments and computations from \cite{Messerschmidt2020} and
\cite{Fernique2021}, but does not at all rely on any computer calculations. 

In Section~\ref{sec:Preliminaries} we define a simple symbolic language
so that we may analyze compact packings entirely abstractly. This
is done to facilitate the combinatorial arguments without the need
to refer to actual compact packings. In Sections~\ref{subsec:Coronal-codes}
and~\ref{subsec:Angle-symbols}, we define what we call \emph{coronal
codes }and \emph{angle symbols. }Coronal codes are abstract representations
of coronas, and angle symbols are abstract representations of the
angle formed at the center of a specific disc through connecting the
centers of three mutually tangent discs with disjoint interior (cf.
Figure~\ref{fig:angle-symbol}). For a coronal code we define what
we call its \emph{angle sum }which is the formal sum of the angle
symbols that represent the sum of the angles formed by connecting
the center of the central disc of a corona to the centers of its petals.

In Sections~\ref{subsec:Realizations} and~\ref{subsec:Canonical-labeling},
we describe what we call \emph{realizations }and what we call the
\emph{canonical labeling of a compact packing}. By a realization we
merely mean assigning concrete positive values to indeterminates in
a formal algebraic expression. For a compact packing, we describe
what we call the \emph{canonical labeling of the packing }(labeling
all discs with non-negative integers in increasing order of size)\emph{,
}along with its \emph{canonical realization} (mapping the label of
a disc to the radius of the disc). As such, all coronal codes from
coronas obtained from a canonically labeled compact packing are necessarily
such that their angle sums, when realized by the canonical realization,
evaluate to $2\pi$.

In Section~\ref{sec:Fundamental-sets} we define what we call a \emph{fundamental
set} of coronal codes. We continue to prove in Theorem~\ref{thm:packings-have-fundamental-sets}
that every canonically labeled compact packing is such that it necessarily
contains a set of coronas so that their associated coronal codes determines
a fundamental set.

In Section~\ref{sec:unique-tight-realizations} we prove Theorem~\ref{thm:fundamental-set-injective-G}.
This is an entirely abstract result which shows that any fundamental
set of coronal codes is such that a certain map from a set of certain
possible realizations to the tuple of realized angle sums is injective.
This allows us to prove Theorem~\ref{thm:packings-determine-exactly-one-realization},
which states that the coronal codes from a canonically labeled compact
packing determine a unique realization $\rho$ (i.e., the canonical
realization) for which these coronal codes' angle sums, when realized
by $\rho$, evaluate to $2\pi$.

Section~\ref{sec:The-bootstrapping-theorem} sees proof of what we
will call The Bootstrapping Lemma (Lemma~\ref{lem:bootstrap}). This
lemma is a generalization of Lemma~\ref{lem:Fernique-lemma}. It
allows us to relate ratios of radii that occur in certain different
realizations for the same fundamental set of coronal codes. This lemma
will be a crucial ingredient in a strong induction argument that forms
part of the main result.

In Section~\ref{sec:Essential-sets-of}, for $n\in\N$ with $n\geq2$,
we define what we call \emph{$n$-essential sets.} An $n$-essential
set is a fundamental set of coronal codes for which there exist two
monotone realizations so that the angle sum of all coronal codes in
the set, when realized by these two maps, the results respectively
lie in the intervals $(0,2\pi]$ and $[2\pi,\infty)$ (cf. Definition~\ref{def:Essential-sets}).
This definition allows for leveraging The Bootstrapping Lemma in a
strong induction argument to prove that, for all $n\in\N$ with $n\geq2$,
there exist at most finitely many $n$-essential sets (Corollary~\ref{cor:set-of-essential-sets-are-finite}).
We finally argue that every compact $n$-packing determines at least
one of the finitely many $n$-essential sets and that every $n$-essential
set determines at most one (perhaps no) element of $\Pi_{n}$. Therefore,
for every $n\in\N$ with $n\geq2$, cardinality of $\Pi_{n}$ is bounded
by the cardinality of the set of all $n$-essential sets (cf. Corollary~\ref{cor:main}).

We do remark that our method for showing $\abs{\Pi_{n}}<\infty$ is
non-constructive and therefore does not determine quantitative bounds
for $\abs{\Pi_{n}}$. As it stands, determining quantitative bounds
would require some significant computational resources (cf. Remark~\ref{rem:why-no-quantitative-bounds}).
For this reason, we opt to use bounds for the size of certain combinatorial
objects that are obvious, but definitely not optimal, as using tighter
bounds or even the exact values will not provide any advantage.

\section{\label{sec:Preliminaries}Preliminaries }

\subsection{\label{subsec:Coronal-codes}Coronal codes}

Let $S$ be any set of symbols and let $m\in\N$. With symbols $c,p_{0},p_{1},\ldots,p_{m-1}\in S$,
by a\emph{ coronal code (over $S$ of length $m$)}, we will mean
a formal string of the form 
\[
c:p_{0}p_{1}\ldots p_{m-1}.
\]
 In the coronal code $x:=c:p_{0}p_{1}\ldots p_{m-1}$, we call the
symbol, $c$, \emph{the center }of the coronal code $x$, and the
symbols, $p_{0},p_{1},\ldots,p_{m-1}$, the \emph{petals }of the coronal
code $x$. We define $\codecenter(x):=c$, $\petals(x):=\{p_{0},p_{1},\ldots,p_{m-1}\}$
and $\len(x):=m$. We will say two coronal codes $c:p_{0}p_{1}\ldots p_{m-1}$
and $d:q_{0}q_{1}\ldots q_{k-1}$ are equivalent if they have the
same length, $m=k$, have the same centers, $c=d$, and the formal
string $p_{0}p_{1}\ldots p_{m-1}$ equals some rotation and/or reflection
of the formal string $q_{0}q_{1}\ldots q_{m-1}$. We denote the set
of all equivalence classes of coronal codes over $S$ by $\codes(S)$.
We will not make explicit distinction between an equivalence class
of coronal codes in $\codes(S)$ and the members of the equivalence
class. 

\subsection{\label{subsec:Angle-symbols}Angle symbols}

Let $S$ be any set of symbols. For symbols $a,b,c\in S$ we define
the formal symbol 
\[
c_{b}^{a}:=\arccos\parenth{\frac{(c+a)^{2}+(c+b)^{2}-(a+b)^{2}}{2(c+a)(c+b)}}
\]
and call the symbol $c_{b}^{a}$ an \emph{angle symbol (over $S$)}.
In an arrangement of three mutually tangent discs, labeled $a,b$
and $c$, an angle symbol $c_{b}^{a}$ is defined to abstractly represent
the angle formed at the center of the disc $c$ by the line segments
connecting the center of the disc labeled $c$ to the centers of the
discs labeled $a$ and $b$ (cf. Figure~\ref{fig:angle-symbol}).

\begin{figure}
\includegraphics[width=0.2\textwidth]{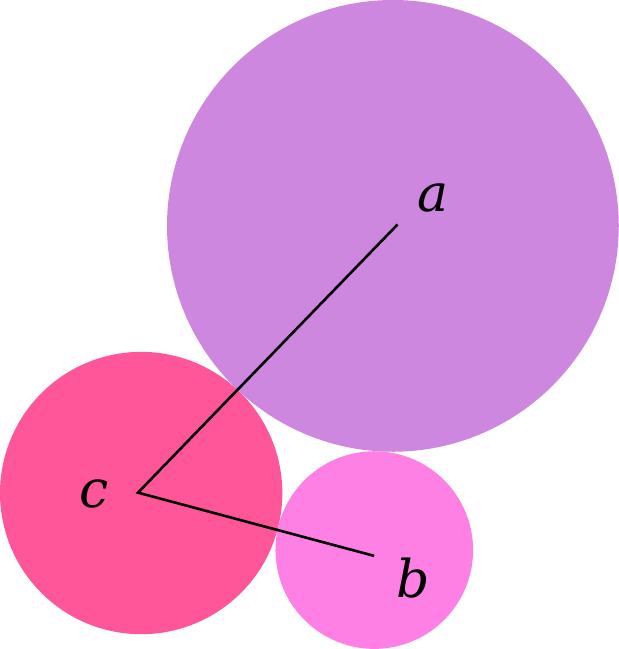}\caption{\label{fig:angle-symbol}The angle represented by the angle symbol
$c_{b}^{a}.$}
\end{figure}

We explicitly note that that elements in $S$ are \emph{always }to
be considered as indeterminate symbols in an angle symbol over $S$.
For an angle symbol $c_{b}^{a}$ we call $c$ the \emph{vertex }of
the angle symbol $c_{b}^{a}$ and we call $a,b$ the \emph{petals
}of the angle symbol $c_{b}^{a}$. We regard $c_{b}^{a}$ and $c_{a}^{b}$
as identical. We denote the set of all angle symbols over $S$ by
$\angles(S)$. For distinct symbols $a,b,c,d\in S$ we define the
following formal symbols:
\[
\begin{array}{ccc}
\begin{array}{rcl}
\partial_{a}c_{a}^{b}=\partial_{a}c_{b}^{a} & := & \frac{\sqrt{bc}}{(c+a)\sqrt{a}\sqrt{a+b+c}},\vspace{2mm}\\
\partial_{a}c_{a}^{c}=\partial_{a}c_{c}^{a} & := & \frac{c}{(c+a)\sqrt{a^{2}+2ac}},\vspace{2mm}\\
\partial_{a}c_{a}^{a} & := & \frac{2\sqrt{c}}{(c+a)\sqrt{2a+c}},\vspace{4mm}\\
\partial_{c}c_{a}^{b}=\partial_{c}c_{b}^{a} & := & -\frac{(a+b+2c)\sqrt{ab}}{(c^{2}+ab+ac+bc)\sqrt{c}\sqrt{a+b+c}},\vspace{2mm}\\
\partial_{c}c_{a}^{c}=\partial_{c}c_{c}^{a} & := & -\frac{a}{(c+a)\sqrt{a^{2}+2ac}},
\end{array} & \quad & \begin{array}{rcl}
\partial_{d}c_{b}^{a} & := & 0,\\
\partial_{d}c_{c}^{a} & := & 0,\\
\partial_{d}c_{a}^{a} & := & 0,\\
\partial_{d}c_{c}^{c} & := & 0,\\
\partial_{c}c_{c}^{c} & := & 0.
\end{array}\end{array}
\]
The symbols defined above are exactly the partial derivatives of the
involved angle symbols toward the stated indeterminate. For our purpose,
the exact values of the expressions above are not as important as
their signs, as will be seen in the proof of Lemma~\ref{lem:strict-inc-of-angle-sum-in-petal-directions}.
For $a\in S$, we will regard $\partial_{a}$ as an additive operator
on sums of angle symbols.

Let $x:=c:p_{0}p_{1}\ldots p_{m-1}\in\codes(S)$ be any coronal code.
We define the formal sum
\[
\alpha(x):=c_{p_{0}}^{p_{1}}+c_{p_{1}}^{p_{2}}+\ldots+c_{p_{m-2}}^{p_{m-1}}+c_{p_{m-1}}^{p_{0}}.
\]
The expression $\alpha(x)$ is called an \emph{angle sum }for the
coronal code $x$. We assume in this formal sum that addition commutes,
and therefore as a map, $\alpha$ is well defined on equivalence classes
in $\codes(S)$. The set of angle symbols that occur in coronal code
$x:=c:p_{0}p_{1}\ldots p_{m-1}\in\codes(S)$ is defined and denoted
as 
\[
\ang(x):=\{c_{p_{0}}^{p_{1}},c_{p_{1}}^{p_{2}},\ldots,c_{p_{m-2}}^{p_{m-1}},c_{p_{m-1}}^{p_{0}}\}\subseteq\angles(S).
\]

\subsection{\label{subsec:Realizations}Realizations}

Let $S$ be any set of symbols. By a \emph{realizer} of $S$ we mean
a map $\rho:S\to(0,\infty)$. With a realizer $\rho:S\to(0,\infty)$
and any (formal) arithmetic expression $E$ containing symbols from
$S$, by $E|_{\rho}$ we mean the expression $E$ with every symbol
$s\in S$ occurring in $E$ replaced by $\rho(s)$ and we will call
$E|_{\rho}$ the \emph{realization of $E$ by $\rho$}. E.g.,
\[
c_{b}^{a}|_{\rho}:=\arccos\parenth{\frac{(\rho(c)+\rho(a))^{2}+(\rho(c)+\rho(b))^{2}-(\rho(a)+\rho(b))^{2}}{2(\rho(c)+\rho(a))(\rho(c)+\rho(b))}}\in\R.
\]
We will say that $x\in\codes(S)$ is \emph{tightly realized by $\rho:S\to(0,\infty)$}
if $\alpha(x)|_{\rho}=2\pi$. With $C\subseteq\codes(S)$, we will
say that $C$ is \emph{tightly realized by $\rho$ }if all $t\in C$
are tightly realized by $\rho$.

\subsection{\label{subsec:Canonical-labeling}Canonical labeling and canonical
realizers}

Let $n\in\N$ and let $Q$ be any collection of discs in the plane
with $\abs{\radii(Q)}=n$. With $0<r_{0}<\ldots<r_{n-1}$ such that
$\radii(Q)=\{r_{0},\ldots,r_{n-1}\}$ we label every disc $D\in Q$
with the unique index $i\in\{0,\ldots,n-1\}$ attached to its radius
$r_{i}$ and we denote this label by $L_{D}$. We call this labeling
of discs \emph{the canonical labeling of $Q$}. The map $\rho:S\to(0,\infty)$
defined, for all $i\in S$, by $\rho(i):=r_{i}$ is called the \emph{canonical
realizer of $Q$.}

Let $P$ be a canonically labeled compact $n$-packing with $0<r_{0}<\ldots<r_{n-1}$
so that $\radii(P)=\{r_{0},\ldots,r_{n-1}\}$. With $S:=\{0,1,\ldots,n-1\}$,
we define the function $\code:\coronas(P)\to\codes(S)$ for any maximal
corona $\cal C\in\coronas(P)$ with center $C\in P$ and petals $D_{0},\ldots,D_{m-1}\in P$,
as 
\[
\code(\cal C):=L_{C}:L_{D_{0}}L_{D_{1}}\ldots L_{D_{m-1}}\in\codes(S).
\]
It is clear, by definition of a compact packing, that the set $(\code\circ\coronas)(P)\subseteq\codes(S)$
is necessarily tightly realized by the canonical realizer of \emph{$P$.}

\section{\label{sec:Fundamental-sets}Fundamental sets and fundamental sets
from compact packings}

In this brief section we introduce define the following structure
of coronal codes, and subsequently show that every compact $n$-packing
determines such a structure.
\begin{defn}
\label{def:fundamental}Let $n\in\N$ and $S:=\{0,1,\ldots,n-1\}$.
We say a non-empty set $C\subseteq\codes(S)$ is a \emph{fundamental
set (of coronal codes)} if $\{\codecenter(x):x\in C\}=\{0,\ldots,n-2\}$
and for every non-empty set $K\subseteq\{0,1,\ldots,n-2\}$ there
exists a subset $D\subseteq C$ satisfying $\{\codecenter(x):x\in D\}=K$
so that
\[
\parenth{\bigcup_{x\in D}\petals(x)}\backslash K\neq\emptyset.
\]
\end{defn}

\begin{thm}
\label{thm:packings-have-fundamental-sets}Let $n\in\N$ with $n>2$
and set $S:=\{0,\ldots,n-1\}$. Let $P$ be a canonically labeled
compact $n$-packing. The set 
\[
C:=\{x\in(\code\circ\coronas)(P):\codecenter(x)\leq n-2\}\subseteq\codes(S)
\]
 is a fundamental set.
\end{thm}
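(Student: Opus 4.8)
The plan is to verify the two requirements of Definition~\ref{def:fundamental} directly; the second requirement reduces to a connectivity statement about the contact graph of $P$.

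For the requirement $\{\codecenter(x):x\in C\}=\{0,\ldots,n-2\}$: since $P$ is a compact $n$-packing, $\radii(P)$ has exactly $n$ elements, so for each $i\in\{0,\ldots,n-1\}$ there is a disc of $P$ with canonical label $i$. By the definition of a compact packing this disc is the center of a corona; enlarging that corona to a maximal one $\mathcal{C}\in\coronas(P)$ produces a code $\code(\mathcal{C})$ with center $i$. Intersecting with the constraint $\codecenter\le n-2$ yields the claim.

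For the second requirement, fix a nonempty $K\subseteq\{0,\ldots,n-2\}$ and set $D:=\{x\in C:\codecenter(x)\in K\}$. By the previous paragraph $\{\codecenter(x):x\in D\}=K$, so it remains to find a code in $D$ with a petal whose label is not in $K$. I would translate this into geometry. In a compact packing the petals of the maximal corona around a disc $G$ are exactly the discs tangent to $G$ — the curvilinear triangle formed by $G$ and two consecutive petals leaves no room for a further disc tangent to $G$ — so $\bigcup_{x\in D}\petals(x)$ equals the set of labels of discs of $P$ that are tangent to some disc of $P$ with label in $K$. Thus it suffices to exhibit a disc of $P$ with label in $K$ tangent to one with label not in $K$. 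If no such tangency existed, then $A:=\{G\in P:L_G\in K\}$ would contain every disc tangent to any of its members; here $A\ne\emptyset$ since $K\ne\emptyset$ and every label occurs, and $A\ne P$ since the label $n-1$ occurs in $P$ but not in $K$. Hence the contact graph of $P$ would be disconnected, contradicting the fact that the contact graph of a compact packing of the plane is connected.

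The bookkeeping above is routine; the real content is the two geometric facts it invokes — that every disc tangent to a given disc occurs as a petal of that disc's maximal corona, and that the contact graph is connected. I would derive both from the observation that the union $\mathcal{U}$ of the closed discs of $P$ together with the closed curvilinear triangular faces determined by the contact graph is open in $\R^2$ (around each disc, its petals and the incident curvilinear triangles cover a neighbourhood, the cusps at tangency points being filled exactly by the incident triangles) and, once $P$ is known to be locally finite, also closed; hence $\mathcal{U}=\R^2$, and $P$ cannot split into two parts with disjoint supports. Verifying this local finiteness / tiling behaviour of a compact packing is the step I expect to demand the most care; granting it (or taking it as part of the standing assumptions on packings), the remainder is essentially immediate.
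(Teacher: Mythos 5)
Your argument is correct and follows essentially the same route as the paper, which argues the contrapositive: if some nonempty $K$ violated the condition, then every corona centred at a label in $K$ would have all petals labelled by $K$, whence only discs labelled by elements of $K$ occur in $P$, contradicting $\abs{\radii(P)}=n$. The only real difference is that you make explicit (and sketch a justification of) the geometric input that the paper compresses into the single sentence ``therefore only discs labeled by elements of $K$ occur in $P$'', namely that every disc tangent to a given disc is a petal of its maximal corona and that the contact graph of a compact packing of the plane is connected.
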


\begin{proof}
Suppose $C$ is not a fundamental set. As $P$ is a compact $n$-packing,
we have $\{\codecenter(x):x\in C\}=\{0,\ldots,n-2\}$. For $C$ to not
be fundamental there then exists a non-empty set $K\subseteq\{0,1,\ldots,n-2\}$
so that, for every subset $D\subseteq C$ with $\{\codecenter(x):x\in D\}=K$,
we have that $\parenth{\bigcup_{x\in D}\petals(x)}\backslash K=\emptyset.$
Therefore every corona in $P$ with center labeled by an element from
$K$, only has petals labeled by $K$ as well. Therefore only discs
labeled by elements of $K$ occur in $P$, an hence $|\radii(P)|\leq|K|<n$,
contradicting $|\radii(P)|=n$.
\end{proof}
Figure~\ref{fig:fundamental-set} displays an example of a fundamental
set determined from a canonically labeled $3$-packing.

\begin{figure}
\includegraphics[width=1\textwidth]{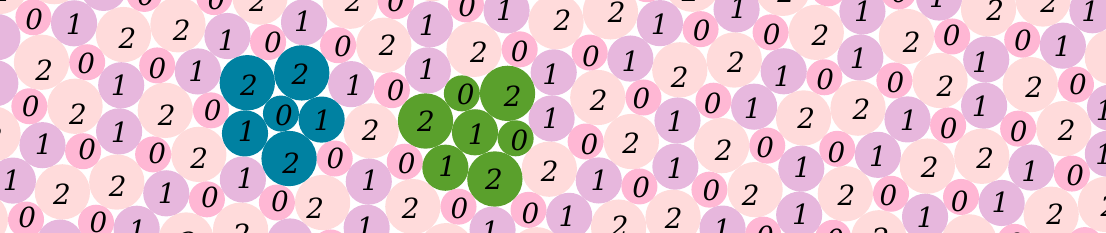}\caption{\label{fig:fundamental-set}An example of a canonically labeled compact
$3$-packing $P$ with fundamental set $\{0:22121,1:212020\}\subseteq(\protect\code\circ\protect\coronas)(P)$
determined from the blue and green coronas (cf. Definition~\ref{def:fundamental}).
The set of coronal codes $\{0:22121,1:212020\}$ determined from the
blue and green coronas also form a $3$-essential set (cf. Definition~\ref{def:Essential-sets}).}
\end{figure}

\section{\label{sec:unique-tight-realizations}Unique tight realizations for
coronal codes from canonically labeled compact packings }

The purpose of this section is to prove Theorem~\ref{thm:fundamental-set-injective-G}.
The main idea is that fundamental sets $C$ (cf. Definition~\ref{def:fundamental})
are such that a map $\mathbf{G}:(0,\infty)^{S}\to\R^{C}$, as defined
in Theorem~\ref{thm:fundamental-set-injective-G}, is injective when
restricted to a certain subset of $(0,\infty)^{S}$. Combining Theorems~\ref{thm:packings-have-fundamental-sets}
and~\ref{thm:fundamental-set-injective-G} then allows us to show,
for any compact $n$-packing $P$, that there exists \emph{exactly}
one element $\Pi_{n}$ that tightly realizes $(\code\circ\coronas)(P)$. 
\begin{lem}
\label{lem:strict-inc-of-angle-sum-in-petal-directions}Let $S$ be
any set of symbols and let $x:=c:p_{0}\ldots p_{m-1}\in\codes(S)$.
For any $\rho\in(0,\infty)^{S}$ and any $v\in[0,\infty)^{S}$ satisfying
both $v(c)=0$ and $v(p)>0$ for at least one petal $p\in\{p_{0},\ldots,p_{m-1}\}$,
the map $[0,\infty)\ni t\mapsto\alpha(x)|_{\rho+tv}$ is strictly
increasing.
\end{lem}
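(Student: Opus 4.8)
The plan is to show that the derivative of $t \mapsto \alpha(x)|_{\rho+tv}$ is strictly positive on $[0,\infty)$. Writing $\alpha(x) = \sum_{i} c_{p_i}^{p_{i+1}}$ (indices mod $m$), the chain rule gives
\[
\frac{d}{dt}\,\alpha(x)|_{\rho+tv} = \sum_{s \in S} v(s)\,\bigl(\partial_s \alpha(x)\bigr)\big|_{\rho+tv},
\]
and since $v(c)=0$, the term $s=c$ drops out. For each petal symbol $s = p \neq c$ appearing among the $p_i$, the quantity $\partial_p \alpha(x)$ is a sum of the formal partial-derivative symbols $\partial_p c_{p_j}^{p_{j+1}}$ defined in Section~\ref{subsec:Angle-symbols}. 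The key point, and the one I would emphasize, is that \emph{every} such symbol is $\geq 0$: inspecting the table, $\partial_a c_a^b,\ \partial_a c_a^c,\ \partial_a c_a^a$ are all manifestly positive (they are quotients of positive quantities when realized), while $\partial_d c_b^a$ and the other "third disc" derivatives are identically $0$; the only negative entries are $\partial_c c_a^b$ and $\partial_c c_c^a$, i.e. derivatives toward the \emph{vertex}, which never arise here because we are differentiating toward petals $p \neq c$. So each $\bigl(\partial_p \alpha(x)\bigr)|_{\rho+tv} \geq 0$, and since $v \geq 0$, every term in the sum is $\geq 0$.

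It remains to show the sum is \emph{strictly} positive. By hypothesis there is at least one petal $p^\*$ with $v(p^\*) > 0$. That petal $p^\*$ occurs in at least one angle symbol of $\alpha(x)$ — say $c_{p^\*}^{q}$ for some neighboring petal $q$ — so $\partial_{p^\*}\alpha(x)$ contains the term $\partial_{p^\*} c_{p^\*}^{q}$, which equals one of $\partial_a c_a^b$, $\partial_a c_a^c$, or $\partial_a c_a^a$ (with $a = p^\*$), each of which is \emph{strictly} positive when realized by any map into $(0,\infty)$ — note $\rho + tv \in (0,\infty)^S$ since $\rho \in (0,\infty)^S$ and $tv \geq 0$. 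Hence $v(p^\*)\,\bigl(\partial_{p^\*}\alpha(x)\bigr)|_{\rho+tv} > 0$, and together with the nonnegativity of all other terms this forces $\frac{d}{dt}\alpha(x)|_{\rho+tv} > 0$ for all $t \geq 0$, so the map is strictly increasing.

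The main obstacle is bookkeeping rather than depth: one must be careful that the formal partial-derivative symbols in the table genuinely compute $\partial/\partial\rho(s)$ of $c_b^a|_\rho$ (the paper asserts this, so I may cite it), and one must handle the degenerate/coincident cases — e.g. when $p^\*$ equals $q$ (giving the $\partial_a c_a^a$ form) or when a petal appears several times around the corona (the contributions only add, never cancel, since all are $\geq 0$). A minor point to state cleanly: the arccos expression is differentiable on the relevant open region because the three discs have disjoint interiors forces the cosine argument to lie strictly in $(-1,1)$, so no boundary issues arise. Once these are noted, the "all relevant partials $\geq 0$, one of them $> 0$" argument closes the proof.
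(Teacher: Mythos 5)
Your proposal is correct and follows essentially the same route as the paper's proof: both reduce to the observation that every formal partial derivative of an angle symbol in a non-vertex direction is nonnegative, that the distinguished petal $p$ with $v(p)>0$ contributes a strictly positive term $\bigl(\partial_{p}\alpha(x)\bigr)|_{\rho+tv}\,v(p)$, and that the $t$-derivative (the directional derivative along $v$) is therefore strictly positive. Your extra remarks on differentiability of the $\arccos$ expression and on repeated petals are careful additions the paper omits, but they do not change the argument.
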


\begin{proof}
Let $t\in[0,\infty)$ be arbitrary. Let $\rho\in(0,\infty)^{S}$ and
let $v\in[0,\infty)^{S}$ satisfy $v(c)=0$ and $v(p)>0$ for at least
one petal $p\in\{p_{0},\ldots,p_{m-1}\}$. Since $v(p)>0$ and $v(c)=0$
we have $p\neq c$. Keeping the definitions of Section~\ref{subsec:Angle-symbols}
in mind, for any $d\in S\backslash\{c\}$ and any $a,b\in S$ we have
$(\partial_{d}c_{b}^{a})|_{\rho+tv}\geq0$ and therefore $(\partial_{d}\alpha(x))|_{\rho+tv}\geq0$.
Further, there exists some $s\in S$ so that the angle symbol $c_{s}^{p}$
occurs at least once as a term in the angle sum $\alpha(x)$. Therefore,
since $\rho+tv\in(0,\infty)^{S}$, we have $(\partial_{p}\alpha(x))|_{\rho+tv}>0$.
Remembering that $v(c)=0$, obtain
\begin{align*}
\sum_{s\in S}(\partial_{s}\alpha(x))|_{\rho+tv}\ v(s) & =\sum_{s\in S\backslash\{c\}}(\partial_{s}\alpha(x))|_{\rho+tv}\ v(s)\\
 & \geq(\partial_{p}\alpha(x))|_{\rho+tv}\ v(p)\\
 & >0.
\end{align*}
Now notice that the directional derivative of the map $(0,\infty)^{S}\ni\sigma\mapsto\alpha(x)|_{\sigma}$
at $\rho+tv$ in the direction of $v$ is a positive scalar multiple
of the left of the above inequality and hence this directional derivative
is strictly positive. Therefore the map $[0,\infty)\ni t\mapsto\alpha(x)|_{\rho+tv}$
is strictly increasing.
\end{proof}
\begin{thm}
\label{thm:fundamental-set-injective-G}Let $n\in\N$ with $n\geq2$
and $S:=\{0,\ldots,n-1\}$. Let $C\subseteq\codes(S)$ be a fundamental
set. The map $\mathbf{G}:(0,\infty)^{S}\to\R^{C}$ defined as
\[
\mathbf{G}(\rho):=(\alpha(x)|_{\rho})_{x\in C}\qquad(\rho\in(0,\infty)^{S}).
\]
is injective when restricted to the set $\{\tau\in(0,\infty)^{S}:\tau(n-1)=1\}$.
\end{thm}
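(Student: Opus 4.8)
The plan is to argue by contradiction. Suppose $\rho,\tau\in(0,\infty)^{S}$ both satisfy $\rho(n-1)=\tau(n-1)=1$ and $\mathbf{G}(\rho)=\mathbf{G}(\tau)$ but $\rho\neq\tau$; I will contradict the hypothesis that $C$ is fundamental. The one preliminary fact I would record is that every angle symbol, and hence every angle sum $\alpha(x)$, is homogeneous of degree zero in the indeterminates: the argument of the $\arccos$ defining $c_{b}^{a}$ is a quotient of two homogeneous degree-two polynomials in $a,b,c$, so $\alpha(x)|_{\mu\sigma}=\alpha(x)|_{\sigma}$ for every realizer $\sigma$ and every scalar $\mu>0$. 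This scale-invariance is exactly what the normalization $\tau(n-1)=1$ is present to defeat, and it is the only reason the statement stands a chance of being true.

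Next I would isolate the ``worst'' coordinate. Let $\lambda:=\max_{i\in S}\tau(i)/\rho(i)$, attained since $S$ is finite; since $\tau(n-1)/\rho(n-1)=1$ we have $\lambda\geq1$, and if $\lambda=1$ then $\tau\leq\rho$ coordinatewise, so $\rho\neq\tau$ forces $\rho(i)>\tau(i)$ somewhere and thus $\max_{i}\rho(i)/\tau(i)>1$; after interchanging $\rho$ and $\tau$ if necessary I may therefore assume $\lambda>1$. Put $K^{*}:=\{i\in S:\tau(i)=\lambda\rho(i)\}$. This set is nonempty, and $n-1\notin K^{*}$ because $\lambda>1$, so $\emptyset\neq K^{*}\subseteq\{0,\dots,n-2\}$. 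Now set $\sigma:=\lambda^{-1}\tau\in(0,\infty)^{S}$ and $v:=\rho-\sigma$. By the choice of $\lambda$ we have $\sigma(i)\leq\rho(i)$ for all $i$, with equality precisely when $i\in K^{*}$; hence $v\in[0,\infty)^{S}$ and $\{i:v(i)=0\}=K^{*}$.

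The heart of the proof is then a single application of Lemma~\ref{lem:strict-inc-of-angle-sum-in-petal-directions} to each code in $C$ centered in $K^{*}$. Fix $x\in C$ with $\codecenter(x)\in K^{*}$. From $\mathbf{G}(\rho)=\mathbf{G}(\tau)$ and the homogeneity remark, $\alpha(x)|_{\rho}=\alpha(x)|_{\tau}=\alpha(x)|_{\sigma}$. Were there a petal $p$ of $x$ with $p\notin K^{*}$, we would have $v(p)>0$ while $v(\codecenter(x))=0$, so Lemma~\ref{lem:strict-inc-of-angle-sum-in-petal-directions} (with its ``$\rho$'' taken to be $\sigma$ and its ``$v$'' taken to be $v$) would give that $t\mapsto\alpha(x)|_{\sigma+tv}$ is strictly increasing; evaluating at $t=0$ and $t=1$ (where $\sigma+v=\rho$) yields $\alpha(x)|_{\rho}>\alpha(x)|_{\sigma}$, a contradiction. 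Hence every $x\in C$ with $\codecenter(x)\in K^{*}$ has $\petals(x)\subseteq K^{*}$. Since $\{\codecenter(x):x\in C\}=\{0,\dots,n-2\}\supseteq K^{*}$, the set $D:=\{x\in C:\codecenter(x)\in K^{*}\}$ satisfies $\{\codecenter(x):x\in D\}=K^{*}$ and $\bigl(\bigcup_{x\in D}\petals(x)\bigr)\setminus K^{*}=\emptyset$; the same holds for every subset of $C$ whose set of centers is exactly $K^{*}$, since such a subset lies inside $D$. This contradicts the defining condition of a fundamental set (Definition~\ref{def:fundamental}) applied to $K=K^{*}$, and therefore $\rho=\tau$.

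I expect the bookkeeping in the second paragraph to be the step most worth care: one must check that dividing $\tau$ by $\lambda$ genuinely produces a realizer lying coordinatewise below $\rho$ and agreeing with $\rho$ exactly on $K^{*}$, so that $v$ is an admissible nonnegative direction vanishing on all the centers fed to the lemma, and that the path $t\mapsto\sigma+tv$ reaches $\rho$ at $t=1$. All the monotonicity content is packaged in Lemma~\ref{lem:strict-inc-of-angle-sum-in-petal-directions}, and the final combinatorial step is just the contrapositive of Definition~\ref{def:fundamental}, so beyond that bookkeeping I do not anticipate any obstacle; in particular no induction on $n$ and no computation are needed.
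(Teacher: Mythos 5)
Your proof is correct and follows essentially the same route as the paper: your $\lambda^{-1}$ is exactly the paper's $t_{0}=\sup\{t:\ t\sigma<\rho\text{ coordinatewise}\}$, your $K^{*}$ is its coincidence set $J$, and the key step is the same application of Lemma~\ref{lem:strict-inc-of-angle-sum-in-petal-directions} along $v=\rho-t_{0}\sigma$. The only cosmetic difference is that you invoke the fundamental-set condition in contrapositive form rather than directly extracting a code with center in $K^{*}$ and a petal outside it.
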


\begin{proof}
Take $\rho,\sigma\in\{\tau\in(0,\infty)^{S}:\tau(n-1)=1\}$ and assume
that $\mathbf{G}(\rho)=\mathbf{G}(\sigma)$, but suppose that $\rho\neq\sigma$.
Hence, by exchanging the roles of $\rho$ and $\sigma$ if necessary,
we are assured that there exists some index $j\in\{0,\ldots,n-2\}$
so that $\rho(j)<\sigma(j)$. 

Define $t_{0}:=\sup\{t\in(0,1):\forall j\in S,\ t\sigma(j)<\rho(j)\}<1$.
As the values of realized angle symbols are invariant under positive
scaling of the realizer, it is readily seen for all $\lambda\in(0,\infty)$
that $\mathbf{G}(\sigma)=\mathbf{G}(\lambda\sigma)$, and in particular,
we have $\mathbf{G}(\sigma)=\mathbf{G}(t_{0}\sigma)$. The set $J:=\{j\in S:t_{0}\sigma(j)=\rho(j)\}$
is non-empty, otherwise $t_{0}$ cannot be the supremum of the set
$\{t\in(0,1):\forall j\in S,\ t\sigma(j)<\rho(j)\}$. Further, since
$t_{0}<1$ we have $n-1\notin J$ and hence $S\backslash J$ is also
non-empty. Since $C$ is a fundamental set, there exists a code $x\in C$,
which we fix, so that $c:=\codecenter(x)\in J$ and $\petals(x)\backslash J$
is not empty. Fix a symbol $p\in\petals(x)\backslash J$.

Define $v:=\rho-t_{0}\sigma\in[0,\infty)^{S}$. The support of $v$
is exactly the set $S\backslash J$ and $v$ takes on positive values
on $S\backslash J$ so that $v(p)>0$ and $v(c)=0$. By Lemma~\ref{lem:strict-inc-of-angle-sum-in-petal-directions},
the map $[0,\infty)\ni t\mapsto\alpha(x)|_{\rho+tv}$ is strictly
increasing. Therefore, since $\mathbf{G}(\sigma)=\mathbf{G}(t_{0}\sigma),$
we have
\[
\alpha(x)|_{\sigma}=\alpha(x)|_{t_{0}\sigma}=\alpha(x)|_{t_{0}\sigma+0v}<\alpha(x)|_{t_{0}\sigma+1v}=\alpha(x)|_{\rho}.
\]
But this contradicts the equality $\alpha(x)|_{\sigma}=\alpha(x)|_{\rho}$
obtained from the assumption $\mathbf{G}(\rho)=\mathbf{G}(\sigma)$.
Therefore the supposition $\rho\neq\sigma$ is false, and therefore
we obtain $\rho=\sigma$. We conclude that $\mathbf{G}$ is injective
when restricted to $\{\tau\in(0,\infty)^{S}:\tau(n-1)=1\}$.
\end{proof}
\begin{thm}
\label{thm:packings-determine-exactly-one-realization}Let $n\in\N$
with $n\geq2$ and set $S:=\{0,\ldots,n-1\}$. Let $P$ be any canonically
labeled compact packing with $\abs{\radii(P)}=n$ and $\max\,\radii(P)=1$.
The canonical realizer of $P$ is the unique element from $\{\tau\in(0,\infty)^{S}:\tau(n-1)=1\}$
that tightly realizes the set $(\code\circ\coronas)(P)\subseteq\codes(S)$.
\end{thm}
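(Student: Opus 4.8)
The plan is simply to assemble the two structural results already in hand. Write $\rho$ for the canonical realizer of $P$, and let $0<r_{0}<\cdots<r_{n-1}=1$ be the elements of $\radii(P)$, so that $\rho(i)=r_{i}$ for each $i\in S$; in particular $\rho(n-1)=r_{n-1}=1$, i.e.\ $\rho\in\{\tau\in(0,\infty)^{S}:\tau(n-1)=1\}$. By the remark closing Section~\ref{subsec:Canonical-labeling}, the whole set $(\code\circ\coronas)(P)$ is tightly realized by $\rho$. This already settles existence: $\rho$ is a member of the set of realizers in $\{\tau:\tau(n-1)=1\}$ that tightly realize $(\code\circ\coronas)(P)$. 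It remains to prove this member is unique.

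For uniqueness I would pass to a fundamental subset. Set $C:=\{x\in(\code\circ\coronas)(P):\codecenter(x)\leq n-2\}$. For $n>2$ this is a fundamental set by Theorem~\ref{thm:packings-have-fundamental-sets}; for $n=2$ the set $C=\{x\in(\code\circ\coronas)(P):\codecenter(x)=0\}$ is checked to satisfy Definition~\ref{def:fundamental} by the same argument as in that theorem (here the only thing to verify is that some corona whose center disc has the smaller size has a petal of the larger size). Since $C\subseteq(\code\circ\coronas)(P)$ and the latter is tightly realized by $\rho$, the subset $C$ is tightly realized by $\rho$ as well, so $\mathbf{G}(\rho)=(\alpha(x)|_{\rho})_{x\in C}=(2\pi)_{x\in C}$.

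Now suppose $\sigma\in\{\tau\in(0,\infty)^{S}:\tau(n-1)=1\}$ also tightly realizes $(\code\circ\coronas)(P)$. Then $\sigma$ tightly realizes $C$, whence $\mathbf{G}(\sigma)=(2\pi)_{x\in C}=\mathbf{G}(\rho)$. By Theorem~\ref{thm:fundamental-set-injective-G}, the map $\mathbf{G}$ is injective on $\{\tau\in(0,\infty)^{S}:\tau(n-1)=1\}$, and therefore $\sigma=\rho$. This shows that the canonical realizer $\rho$ is the unique element of $\{\tau:\tau(n-1)=1\}$ tightly realizing $(\code\circ\coronas)(P)$.

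There is essentially no new content to overcome here: the statement is a bookkeeping consequence of Theorems~\ref{thm:packings-have-fundamental-sets} and~\ref{thm:fundamental-set-injective-G}, combined with the trivial observation that if a realizer tightly realizes a set of coronal codes then it tightly realizes every subset of that set. The only step deserving a moment's care is the case $n=2$, which lies outside the hypotheses of Theorem~\ref{thm:packings-have-fundamental-sets} and must be handled separately (either directly, as indicated above, or by noting that the proof of that theorem in fact applies verbatim for all $n\geq2$).
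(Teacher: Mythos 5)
Your proof is correct and follows essentially the same route as the paper: extract the fundamental set $C$ of codes with centers at most $n-2$, observe the canonical realizer tightly realizes it, and invoke the injectivity of $\mathbf{G}$ from Theorem~\ref{thm:fundamental-set-injective-G} on $\{\tau:\tau(n-1)=1\}$. Your explicit handling of the $n=2$ case is a reasonable extra precaution (the paper applies Theorem~\ref{thm:packings-have-fundamental-sets}, stated for $n>2$, without comment, though its proof indeed works verbatim for $n=2$).
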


\begin{proof}
By Theorem~\ref{thm:packings-have-fundamental-sets}, the set $C:=\{x\in\code\circ\coronas(P):\codecenter(x)\leq n-2\}$
is a fundamental set. By Theorem~\ref{thm:fundamental-set-injective-G},
the map $\mathbf{G}:(0,\infty)^{S}\to\R^{C}$ defined as
\[
\mathbf{G}(\rho):=(\alpha(x)|_{\rho})_{x\in C},\qquad(\rho\in(0,\infty)^{S})
\]
is injective when restricted to $\{\tau\in(0,\infty)^{S}:\tau(n-1)=1\}$.
The canonical realizer $\rho$ of $P$ is an element of $\{\tau\in(0,\infty)^{S}:\tau(n-1)=1\}$
and satisfies $\mathbf{G}(\rho)_{x}=\alpha(x)|_{\rho}=2\pi$ for all
$x\in C$, so $\rho$ is the unique element in $\{\tau\in(0,\infty)^{S}:\tau(n-1)=1\}$
that tightly realizes $C$. Since every realizer that tightly realizes
$(\code\circ\coronas)(P)$ also tightly realizes $C$, the canonical
realizer $\rho$ of $P$ is the unique element of $\{\tau\in(0,\infty)^{S}:\tau(n-1)=1\}$
that tightly realizes $(\code\circ\coronas)(P)$.
\end{proof}

\section{\label{sec:The-bootstrapping-theorem}The bootstrapping lemma}

In this section we prove what we call The Bootstrapping Lemma (Lemma~\ref{lem:bootstrap})
which should be read as a generalization of Lemma~\ref{lem:Fernique-lemma}.
The proof of Lemma~\ref{lem:bootstrap} leverages the definition
of a fundamental set $C$ to allow for the comparison of ratios of
values from two different realizations $\rho$ and $\sigma$ satisfying
$\alpha(x)|_{\rho}\leq2\pi\leq\alpha(x)|_{\sigma}$ for all $x\in C$. 
\begin{lem}[The Bootstrapping Lemma]
\label{lem:bootstrap}Let $n\in\N$ with $n\geq2$ and $S:=\{0,\ldots,n-1\}$.
Let $C\subseteq\codes(S)$ be a fundamental set. Let $\rho,\sigma\in(0,\infty)^{S}$
be maps such that for all $x\in C$ we have $\alpha(x)|_{\rho}\leq2\pi\leq\alpha(x)|_{\sigma}.$
Then 
\[
\frac{\sigma(n-2)}{\sigma(n-1)}\leq\frac{\rho(n-2)}{\rho(n-1)}.
\]
\end{lem}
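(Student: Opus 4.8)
The plan is to mimic the argument of Theorem~\ref{thm:fundamental-set-injective-G}, replacing the equalities of realized angle sums used there with the one-sided inequalities available here. Since realized angle symbols (and hence realized angle sums) are invariant under positive scaling of the realizer, and since both the hypothesis $2\pi\leq\alpha(x)|_{\sigma}$ and the desired conclusion $\frac{\sigma(n-2)}{\sigma(n-1)}\leq\frac{\rho(n-2)}{\rho(n-1)}$ are unchanged when $\sigma$ is replaced by $\lambda\sigma$ for $\lambda>0$, I would first rescale $\sigma$ so that $\sigma(n-1)=\rho(n-1)$; the conclusion then becomes simply $\sigma(n-2)\leq\rho(n-2)$. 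Assume for contradiction that $\rho(n-2)<\sigma(n-2)$.

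Following the proof of Theorem~\ref{thm:fundamental-set-injective-G}, set $t_{0}:=\sup\{t\in(0,1):t\sigma(j)<\rho(j)\text{ for all }j\in S\}$. This set contains the interval $(0,\min_{j}\rho(j)/\sigma(j))$ so it is non-empty, and since $\rho(n-2)<\sigma(n-2)$ forces $t\leq\rho(n-2)/\sigma(n-2)<1$ for every $t$ in it, we get $t_{0}\in(0,1)$. By the supremum property together with finiteness of $S$, the set $J:=\{j\in S:t_{0}\sigma(j)=\rho(j)\}$ is non-empty, while $n-1\notin J$ because $t_{0}\sigma(n-1)=t_{0}\rho(n-1)<\rho(n-1)$; thus $J$ is a non-empty subset of $\{0,\ldots,n-2\}$. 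Invoking the fundamental-set property (Definition~\ref{def:fundamental}) with $K:=J$ produces a subset $D\subseteq C$ with $\{\codecenter(x):x\in D\}=J$ and $\parenth{\bigcup_{x\in D}\petals(x)}\backslash J\neq\emptyset$, hence a code $x\in C$ with $c:=\codecenter(x)\in J$ and a petal $p\in\petals(x)\backslash J$.

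Now set $v:=\rho-t_{0}\sigma$. By the definition of $t_{0}$ we have $v\in[0,\infty)^{S}$ with support exactly $S\backslash J$; in particular $v(c)=0$ and $v(p)>0$. Applying Lemma~\ref{lem:strict-inc-of-angle-sum-in-petal-directions} with base point $t_{0}\sigma\in(0,\infty)^{S}$ and direction $v$ shows that $[0,\infty)\ni t\mapsto\alpha(x)|_{t_{0}\sigma+tv}$ is strictly increasing, so
\[
\alpha(x)|_{t_{0}\sigma}<\alpha(x)|_{t_{0}\sigma+v}=\alpha(x)|_{\rho}.
\]
Combining this with scaling invariance $\alpha(x)|_{t_{0}\sigma}=\alpha(x)|_{\sigma}$ and the hypotheses $2\pi\leq\alpha(x)|_{\sigma}$ and $\alpha(x)|_{\rho}\leq2\pi$ gives $2\pi\leq\alpha(x)|_{\sigma}=\alpha(x)|_{t_{0}\sigma}<\alpha(x)|_{\rho}\leq2\pi$, a contradiction. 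Hence $\sigma(n-2)\leq\rho(n-2)$, and undoing the rescaling yields the claimed inequality.

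The only real obstacle I anticipate is bookkeeping: checking that $t_{0}$ lies strictly between $0$ and $1$ and that $J$ is a non-empty subset of $\{0,\ldots,n-2\}$ — that is, making precise that the contradiction hypothesis $\rho(n-2)<\sigma(n-2)$ is exactly what keeps $n-1$ out of $J$, so that the fundamental-set property (whose centers live in $\{0,\ldots,n-2\}$) applies to $K=J$. Beyond that, the argument transfers essentially verbatim from the proof of Theorem~\ref{thm:fundamental-set-injective-G}.
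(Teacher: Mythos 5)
Your proof is correct and follows essentially the same route as the paper's: the same $t_{0}$--$J$ construction, the same use of the fundamental-set property to produce a code with center in $J$ and a petal outside $J$, and the same appeal to Lemma~\ref{lem:strict-inc-of-angle-sum-in-petal-directions} plus scale invariance to reach the contradiction $2\pi\leq\alpha(x)|_{\sigma}<\alpha(x)|_{\rho}\leq2\pi$. The only (harmless) difference is organizational: you normalize $\sigma(n-1)=\rho(n-1)$ and argue by contradiction so that $n-1\notin J$ is forced, whereas the paper argues directly and splits into the cases $n-1\in J$ (which yields the inequality immediately) and $n-1\notin J$ (which it rules out exactly as you do).
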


\begin{proof}
If, for some $t\in(0,\infty)$, it holds that $\rho=t\sigma$, then
we immediately obtain 
\[
\frac{\sigma(n-2)}{\sigma(n-1)}=\frac{t\sigma(n-2)}{t\sigma(n-1)}=\frac{\rho(n-2)}{\rho(n-1)}.
\]
We therefore assume, for all $t\in(0,\infty)$, that $\rho\neq t\sigma.$ 

Define $t_{0}:=\sup\{t\in(0,\infty):\forall j\in S,\ t\sigma(j)<\rho(j)\}$
and $J:=\{j\in S:t_{0}\sigma(j)=\rho(j)\}$. The set $J$ is non-empty,
else $t_{0}$ cannot be the supremum of the set $\{t\in(0,\infty):\forall j\in S,\ t\sigma(j)<\rho(j)\}$.
Furthermore, since $\rho\neq t\sigma$ for all $t\in(0,\infty)$,
the set $J$ does not equal the whole of $S$. 

We consider the two cases $n-1\in J$ and $n-1\notin J$.

If $n-1\in J$, then by definition of $t_{0}$ and $J$, we have $t_{0}\sigma(n-1)=\rho(n-1)$
and $t_{0}\sigma(n-2)\leq\rho(n-2)$. Hence 
\[
\frac{\sigma(n-2)}{\sigma(n-1)}=\frac{t_{0}\sigma(n-2)}{t_{0}\sigma(n-1)}=\frac{t_{0}\sigma(n-2)}{\rho(n-1)}\leq\frac{\rho(n-2)}{\rho(n-1)}.
\]

The case $n-1\notin J$ remains. We show that this case cannot occur.
Suppose $n-1\notin J$, so that $J\subseteq\{0,\ldots,n-2\}$. Since
$C$ is fundamental, there exists a code $x\in C$ so that $c:=\codecenter(x)\in J$
but with a petal $p\in\petals(x)$ so that $p\notin J$. Since realized
angle sums are invariant under positive scaling of the realizer, we
have $\alpha(x)|_{\sigma}=\alpha(x)|_{t_{0}\sigma}$. Define $v:=\rho-t_{0}\sigma$.
The support of $v$ is exactly the set $S\backslash J$ and $v$ takes
on positive values on $S\backslash J$ so that $v(c)=0$ and $v(p)>0$.
By Lemma~\ref{lem:strict-inc-of-angle-sum-in-petal-directions},
the map $[0,\infty)\ni t\mapsto\alpha(x)|_{\rho+tv}$ is strictly
increasing. However this yields the absurdity
\[
2\pi\leq\alpha(x)|_{\sigma}=\alpha(x)|_{t_{0}\sigma}=\alpha(x)|_{t_{0}\sigma+0v}<\alpha(x)|_{t_{0}\sigma+1v}=\alpha(x)|_{\rho}\leq2\pi,
\]
from which we conclude that the supposition $n-1\notin J$ is false.
\end{proof}
We briefly show how Lemma~\ref{lem:bootstrap} implies Lemma~\ref{lem:Fernique-lemma}.
For every canonically labeled corona $Q$ with $\abs{\radii(Q)}=2$
with smaller disc as center, let $\sigma_{Q}\in(0,\infty)^{\{0,1\}}$
denote the canonical realizer of $Q$. Furthermore, there are only
finitely many such coronas (modulo rigid motion and scaling). Take
any canonically labeled compact $3$-packing $P$ with canonical realizer
$\rho$. Obviously $\alpha(x)|_{\rho}=2\pi$ for all $x\in(\code\circ\coronas)(P)$.
Take any coronal code $x\in(\code\circ\coronas)(P)\subseteq\codes(\{0,1,2\})$
that has center $0$ and that has a petal that is not equal to $0$.
Construct the code $y\in\codes(\{0,1\})$ from $x$ by replacing every
symbol $2$ occurring in $x$ with a $1$. The code $y$ is noticed
to coincide with a code determined by a canonically labeled corona
$Q'$ with $\abs{\radii(Q')}=2$ with smaller disc as center. But
now $\alpha(y)|_{\rho}\leq2\pi=\alpha(y)|_{\sigma_{Q'}}$. Therefore,
with fundamental set $C:=\{y\}$, by Lemma~\ref{lem:bootstrap},
\[
\frac{\sigma_{Q'}(0)}{\sigma_{Q'}(1)}\leq\frac{\rho(0)}{\rho(1)}.
\]
As there exist only finitely many canonically labeled coronas $Q$
with $\abs{\radii(Q)}=2$ having small disc as center (modulo rigid
motion and scaling), taking $K:=\min_{Q}\{\sigma_{Q}(0)/\sigma_{Q}(1)\}>0$,
we obtain 
\[
0<K\leq\frac{\sigma_{Q'}(0)}{\sigma_{Q'}(1)}\leq\frac{\rho(0)}{\rho(1)}
\]
with $K$ independent of the compact $3$-packing $P$, establishing
Lemma~\ref{lem:Fernique-lemma}.

\section{\label{sec:Essential-sets-of}Essential sets of coronal codes and
proof of the main result}

We now turn to proving the main result of this paper. We introduce
what we call $n$-essential sets of coronal codes (Definition~\ref{def:Essential-sets}).
We firstly show, for every $n\in\N$ with $n\geq2$, that the set
of all $n$-essential sets, $\cal E_{n}$, is finite (cf. Corollary~\ref{cor:set-of-essential-sets-are-finite}).
This proceeds through a strong induction argument in Lemma~\ref{lem:Em-finite-implies-En-finite}
which utilizes the The Bootstrapping Lemma (Lemma~\ref{lem:bootstrap})
to place a universal bound on the length of all coronal codes that
can occur as elements of any $n$-essential set. Finally we show every
compact $n$-packing determines one of the finitely many $n$-essential
sets in $\cal E_{n}$ (cf. Theorem~\ref{thm:compact-packings-have-n-essential-sets})
and that every element in $\cal E_{n}$ determines at most one element
of $\Pi_{n}$ so that $\abs{\Pi_{n}}\leq\abs{\cal E_{n}}$ (Corollary~\ref{cor:main}).
This will then prove the main result of this paper, Theorem~\ref{thm:main-thm}.
\begin{defn}
\label{def:Essential-sets}Let $n\in\N$ with $n\geq2$ and $S:=\{0,\ldots,n-1\}$.
We will say that a set $C\subseteq\codes(S)$ is an \emph{$n$-essential
}set if $C$ satisfies both of the following conditions:
\begin{enumerate}
\item The set $C$ is a fundamental set.
\item There exist monotone maps $\rho,\sigma\in(0,\infty)^{S}$ so that,
for all $x\in C$, we have $\alpha(x)|_{\rho}\leq2\pi\leq\alpha(x)|_{\sigma}.$
\end{enumerate}
We denote the set of all $n$-essential sets by $\cal E_{n}\subseteq2^{\codes(S)}$.
\end{defn}

\begin{defn}
Let $S$ be a totally ordered set. Let $E$ be any formal expression
containing symbols from $S$. For $k\in S$, by the formal expression
$E\downarrow_{k}$ we mean the formal expression constructed from
$E$ with every symbol that occurs in $E$ that is strictly larger
than $k$ being replaced by $k$.
\end{defn}

\begin{lem}
\label{lem:shrinking-essential-set-remains-essential}Let $n\in\N$
with $n\geq2$ and $S:=\{0,\ldots,n-1\}$. If $C\subseteq\codes(S)$
is $n$-essential, then for every $m\in\{2,\ldots,n-1\}$ with $S_{m}:=\{0,\ldots,m-1\}$,
the set $C':=\{x\downarrow_{m-1}:x\in C,\ \codecenter(x)\leq m-2\}\subseteq\codes(S_{m})$
is $m$-essential.
\end{lem}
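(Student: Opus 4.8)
The plan is to verify separately the two defining conditions of an $m$-essential set for $C'$: that $C'$ is a fundamental set over $S_{m}$, and that there exist monotone realizers $\rho',\sigma'\in(0,\infty)^{S_{m}}$ with $\alpha(y)|_{\rho'}\le 2\pi\le\alpha(y)|_{\sigma'}$ for all $y\in C'$. Throughout I would use two elementary observations: first, for a code $x$ over $S$ with $\codecenter(x)\le m-2$, the code $x\downarrow_{m-1}$ has the same center and the same length as $x$, and its petals are obtained from those of $x$ by applying $q\mapsto\min(q,m-1)$; second, the value $c_{b}^{a}|_{\tau}$ of a realized angle symbol depends only on the three positive reals assigned to its three symbols, not on which symbols those are.

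For the fundamental-set condition I would pull everything back through $C$. That $\{\codecenter(y):y\in C'\}=\{0,\ldots,m-2\}$ is immediate, since centers $\le m-2$ are untouched by $\downarrow_{m-1}$ and $C$, being fundamental over $S$, realizes every center in $\{0,\ldots,n-2\}\supseteq\{0,\ldots,m-2\}$; in particular $C'\ne\emptyset$. Given a non-empty $K\subseteq\{0,\ldots,m-2\}$, fundamentality of $C$ yields $D\subseteq C$ with $\{\codecenter(x):x\in D\}=K$ and some petal $p$ of some code in $D$ with $p\notin K$. Since all centers in $D$ lie in $K\subseteq\{0,\ldots,m-2\}$, we get $D':=\{x\downarrow_{m-1}:x\in D\}\subseteq C'$, again with center set $K$; moreover $\min(p,m-1)$ is a petal of a code in $D'$, and since $K\subseteq\{0,\ldots,m-2\}$ one has $\min(p,m-1)\notin K$ whether $p\le m-2$ or $p\ge m-1$. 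Hence $\bigl(\bigcup_{y\in D'}\petals(y)\bigr)\setminus K\ne\emptyset$, as required.

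For the realizers, let $\rho,\sigma\in(0,\infty)^{S}$ be the monotone maps witnessing $n$-essentiality of $C$. I would put $\rho':=\rho|_{S_{m}}$ and define $\sigma'\in(0,\infty)^{S_{m}}$ by $\sigma'(j):=\sigma(j)$ for $j\le m-2$ and $\sigma'(m-1):=\sigma(n-1)$; both are monotone ($\rho'$ as a restriction, $\sigma'$ since $\sigma(m-2)\le\sigma(n-1)$). The key auxiliary fact I would record, a direct consequence of Lemma~\ref{lem:strict-inc-of-angle-sum-in-petal-directions}, is: if $x$ is a code over $S$ with center $c$ and $\mu,\nu\in(0,\infty)^{S}$ satisfy $\mu\le\nu$ pointwise and $\mu(c)=\nu(c)$, then $\alpha(x)|_{\mu}\le\alpha(x)|_{\nu}$ (apply the lemma to $x$ with realizer $\mu$ and direction $v:=\nu-\mu$, noting $v(c)=0$; the map $t\mapsto\alpha(x)|_{\mu+tv}$ is strictly increasing if $v$ is positive on some petal of $x$ and constant otherwise). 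Then, for $x\in C$ with $c:=\codecenter(x)\le m-2$, I would introduce the pulled-back realizers $\hat\rho,\hat\sigma\in(0,\infty)^{S}$ given by $\hat\rho(j):=\rho(\min(j,m-1))$ and $\hat\sigma(j):=\sigma(j)$ for $j\le m-2$, $\hat\sigma(j):=\sigma(n-1)$ for $j\ge m-1$. A term-by-term comparison (using the second observation above and checking the cases $p_{i}\le m-2$, $p_{i}=m-1$, $p_{i}\ge m$ for each petal) gives $\alpha(x\downarrow_{m-1})|_{\rho'}=\alpha(x)|_{\hat\rho}$ and $\alpha(x\downarrow_{m-1})|_{\sigma'}=\alpha(x)|_{\hat\sigma}$. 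Since $\hat\rho\le\rho$ pointwise with $\hat\rho(c)=\rho(c)$ (as $c\le m-2$) and $\sigma\le\hat\sigma$ pointwise with $\hat\sigma(c)=\sigma(c)$, the auxiliary fact yields $\alpha(x\downarrow_{m-1})|_{\rho'}\le\alpha(x)|_{\rho}\le 2\pi$ and $\alpha(x\downarrow_{m-1})|_{\sigma'}\ge\alpha(x)|_{\sigma}\ge 2\pi$. As every element of $C'$ has the form $x\downarrow_{m-1}$ with $x\in C$, $\codecenter(x)\le m-2$, this establishes condition (2), so $C'$ is $m$-essential.

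The main obstacle, as I see it, is the construction of $\sigma'$: merely restricting $\sigma$ to $S_{m}$ would shrink the radii assigned to petals previously labelled $\ge m$, hence decrease the relevant angle sums — the wrong direction. The fix is to ``inflate'' the top symbol $m-1$ of $S_{m}$ back up to the size $\sigma(n-1)$ of the largest original disc, so that every petal of $x\downarrow_{m-1}$ receives at least the radius of the corresponding petal of $x$ while the unchanged center keeps its original radius; monotonicity of angle sums in the petal directions (Lemma~\ref{lem:strict-inc-of-angle-sum-in-petal-directions}) then closes the argument. Everything else is bookkeeping about how $\downarrow_{m-1}$ interacts with centers, petals and lengths.
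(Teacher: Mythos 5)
Your proposal is correct and follows essentially the same route as the paper: the same pullback argument for fundamentality of $C'$, and the same choice of realizers $\rho':=\rho|_{S_m}$ and $\sigma'$ with $\sigma'(m-1):=\sigma(n-1)$. In fact you supply more detail than the paper does for the key comparison $\alpha(x\downarrow_{m-1})|_{\rho'}\leq\alpha(x)|_{\rho}\leq 2\pi\leq\alpha(x)|_{\sigma}\leq\alpha(x\downarrow_{m-1})|_{\sigma'}$, which the paper asserts in one line but which, as you correctly observe, rests on the monotonicity of angle sums in the petal directions (Lemma~\ref{lem:strict-inc-of-angle-sum-in-petal-directions}).
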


\begin{proof}
Assume that $C$ is $n$-essential. By definition, $C$ is fundamental
and there exist monotone maps $\rho,\sigma\in(0,\infty)^{S}$ so that,
for all $x\in C$, we have $\alpha(x)|_{\rho}\leq2\pi\leq\alpha(x)|_{\sigma}.$

Let $m\in\{2,\ldots,n-1\}$, and define $C':=\{x\downarrow_{m-1}:x\in C,\ \codecenter(x)\leq m-2\}\subseteq\codes(S_{m}).$
We firstly claim that $C'$ is a fundamental set. That the set of
centers of codes from $C'$ is $\{0,\ldots,m-2\}$ follows from construction.
Let 
\[
K\subseteq\{0,\ldots,m-2\}\subseteq\{0,\ldots,n-2\}
\]
be any non-empty set. As $C$ is fundamental, there exists a subset
$D\subseteq C$ so that $\{\codecenter(x):x\in D\}=K$ with $\parenth{\bigcup_{x\in D}\petals(x)}\backslash K\neq\emptyset$.
We define $D':=\{x\downarrow_{m-1}:x\in D\}\subseteq C'$. Since,
for all $k\in K$ we have $k<m-1$, then 
\[
\{\codecenter(x):x\in D'\}=\{\codecenter(x):x\in D\}=K.
\]
Further for any $p\in\parenth{\bigcup_{x\in D}\petals(x)}\backslash K\neq\emptyset$
we either have that have that $p\geq m-1$ or that $p<m-1$ with $p\notin K$.
In the case that $p\geq m-1$, we have that $p\downarrow_{m-1}=m-1\notin K$
so that $p\downarrow_{m-1}=m-1\in\parenth{\bigcup_{x\in D'}\petals(x)}\backslash K$.
In the case that $p<m-1$ with $p\notin K$, we have $p\downarrow_{m-1}=p\notin K$
so that $p\downarrow_{m-1}=p\in\parenth{\bigcup_{x\in D'}\petals(x)}\backslash K$.
In both cases $\parenth{\bigcup_{x\in D'}\petals(x)}\backslash K\neq\emptyset$.
Therefore $C'$ is fundamental.

We define $\rho',\sigma'\in(0,\infty)^{S_{m}}$ as 
\[
\rho'(s):=\begin{cases}
\rho(s) & s<m-1\\
\rho(m-1) & s=m-1
\end{cases}\qquad(s\in S_{m})
\]
and 
\[
\sigma'(s):=\begin{cases}
\sigma(s) & s<m-1\\
\sigma(n-1) & s=m-1
\end{cases}\qquad(s\in S_{m}).
\]
Since $\rho$ and $\sigma$ are both monotone, so are $\sigma'$ and
$\rho'$. Further, for all $y\in C'$ take $x\in C$ so that $y=x\downarrow_{m-1}$,
we have 
\[
\alpha(y)|_{\rho'}=\alpha(x\downarrow_{m-1})|_{\rho'}\leq\alpha(x)|_{\rho}\leq2\pi\leq\alpha(x)|_{\sigma}\leq\alpha(x\downarrow_{m-1})|_{\sigma'}=\alpha(y)|_{\sigma'}\ .
\]
We therefore conclude that $C'$ is $m$-essential.
\end{proof}
\begin{lem}
\label{lem:Em-finite-implies-En-finite}Let $n\in\N$ with $n\geq3$.
If, for all $m\in\{2,\ldots,n-1\}$, the set $\cal E_{m}$ is finite,
then the set $\cal E_{n}$ is finite.
\end{lem}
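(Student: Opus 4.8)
The plan is to reduce, via a strong induction hypothesis, the infinitude of $\cal E_n$ to the production of \emph{arbitrarily long} coronal codes occurring in $n$-essential sets, and then to rule that out using The Bootstrapping Lemma together with the combinatorial fact that a corona with bounded-below petal-to-center radius ratios cannot be arbitrarily long. First I would observe that, since the alphabet $S=\{0,\ldots,n-1\}$ is finite, there are only finitely many coronal codes over $S$ of any bounded length, hence only finitely many subsets of such codes; so it suffices to exhibit a universal bound $M=M(n)$ on $\len(x)$ for every $x$ occurring in any $n$-essential set $C$. Suppose toward a contradiction that no such bound exists: there is a sequence $(C_k)$ of $n$-essential sets and codes $x_k\in C_k$ with $\len(x_k)\to\infty$. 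For each $k$, fix monotone realizers $\rho_k,\sigma_k\in(0,\infty)^S$ witnessing essentiality, normalized so that $\rho_k(n-1)=1$.

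The key step is to extract a uniform lower bound on the ratios $\rho_k(j)/\rho_k(j')$ for consecutive (or all) indices $j<j'$. For each $m\in\{2,\ldots,n-1\}$, applying Lemma~\ref{lem:shrinking-essential-set-remains-essential} to $C_k$ produces an $m$-essential set $C_k'\in\cal E_m$; by the induction hypothesis $\cal E_m$ is finite, so across all $k$ only finitely many $m$-essential sets arise, and each such set admits \emph{some} monotone realizer on $\{0,\ldots,m-1\}$ tightly dominating $2\pi$ from below and some dominating from above. Feeding these into Lemma~\ref{lem:bootstrap} (exactly as done in the excerpt's derivation of Lemma~\ref{lem:Fernique-lemma} from Lemma~\ref{lem:bootstrap}) bounds $\sigma_{m\text{-ess}}(m-2)/\sigma_{m\text{-ess}}(m-1)$ below by $\rho_k$'s corresponding ratio, and since only finitely many $m$-essential sets are in play, we obtain a constant $K_m>0$, independent of $k$, with $K_m\le\rho_k(m-2)/\rho_k(m-1)$. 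Chaining these over $m=2,\ldots,n-1$ yields a single constant $\kappa=\kappa(n)>0$ with $\kappa\le\rho_k(j)/\rho_k(n-1)=\rho_k(j)$ for all $j$ and all $k$; in particular all radii in the realizer $\rho_k$ lie in $[\kappa,1]$.

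Now I would invoke the geometric meaning of $\alpha(x_k)|_{\rho_k}\le 2\pi$. Each petal of $x_k$ contributes an angle symbol whose realized value is the angle subtended at the center of a disc of radius $\rho_k(c)\in[\kappa,1]$ by a tangent disc of radius in $[\kappa,1]$; a disc of radius $\ge\kappa$ tangent to a disc of radius $\le 1$ subtends, at the latter's center, an angle bounded below by $2\arcsin\!\bigl(\tfrac{\kappa}{\kappa+1}\bigr)=:\theta_0>0$. Hence $\alpha(x_k)|_{\rho_k}\ge \len(x_k)\,\theta_0$, so $\len(x_k)\le 2\pi/\theta_0$, a bound independent of $k$ — contradicting $\len(x_k)\to\infty$. (If the excerpt does not already contain the elementary inequality bounding a realized angle symbol below in terms of the radii, I would state and prove it as a one-line sublemma via the $\arccos$ formula defining $c_b^a$, using monotonicity in the petal sizes already recorded in Lemma~\ref{lem:strict-inc-of-angle-sum-in-petal-directions}.) This contradiction shows the desired length bound $M(n)$ exists, and therefore $\cal E_n\subseteq 2^{\{x\in\codes(S):\len(x)\le M(n)\}}$ is finite.

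The main obstacle I anticipate is the bookkeeping in the second paragraph: Lemma~\ref{lem:bootstrap} compares the \emph{top two} coordinates $(n-2),(n-1)$ of two realizers for the \emph{same} fundamental set, so to bound a non-top ratio $\rho_k(m-2)/\rho_k(m-1)$ I must genuinely pass to the $m$-essential restriction $C_k'$ (where $m-1$ is the new top index) and apply the lemma there, then transport the inequality back — being careful that the monotone realizer for $C_k'$ produced by the finiteness of $\cal E_m$ and the one induced from $\rho_k$ by the construction in Lemma~\ref{lem:shrinking-essential-set-remains-essential} interact correctly under Lemma~\ref{lem:bootstrap}. Once that chaining is set up cleanly, the rest is the routine angle estimate above.
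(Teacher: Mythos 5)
Your overall architecture is exactly the paper's: reduce finiteness of $\cal E_{n}$ to a universal length bound on codes in $n$-essential sets, obtain that bound by restricting each $C\in\cal E_{n}$ to the finitely many $m$-essential sets via Lemma~\ref{lem:shrinking-essential-set-remains-essential}, feed the finitely many fixed realizers $\sigma_{D}$ ($D\in\cal E_{m}$) into Lemma~\ref{lem:bootstrap} to get constants $K_{m}\leq\rho(m-2)/\rho(m-1)$, telescope, and bound each realized angle symbol from below. The contradiction framing and the normalization $\rho_{k}(n-1)=1$ are cosmetic differences.

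There is, however, one step that is wrong as stated. The telescoping of the inequalities $K_{m}\leq\rho_{k}(m-2)/\rho_{k}(m-1)$ over $m\in\{2,\ldots,n-1\}$ only yields a lower bound on $\rho_{k}(0)/\rho_{k}(n-2)$; it gives \emph{no} control on $\rho_{k}(n-2)/\rho_{k}(n-1)$, since that ratio would require applying the bootstrapping argument with $m=n$, i.e., taking a minimum over $\cal E_{n}$ itself, which is not yet known to be finite. So your claim that $\kappa\leq\rho_{k}(j)/\rho_{k}(n-1)=\rho_{k}(j)$ for all $j$, and hence that all radii lie in $[\kappa,1]$, does not follow: the top radius $\rho_{k}(n-1)=1$ may be arbitrarily large relative to all the others. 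Your subsequent angle estimate survives only because (a) realized angle symbols depend on radii only through their ratios, and (b) every code in a fundamental set has center at most $n-2$, so the petal-to-center ratio is at least $\rho_{k}(0)/\rho_{k}(n-2)\geq\kappa$ even though the absolute radii are not bounded below. This is precisely why the paper defines its realizer $\kappa$ only on the symbols $\{0,n-2\}$ and uses the single angle symbol $(n-2)_{0}^{0}|_{\kappa}$ as the uniform lower bound, rather than claiming a bound relative to $n-1$. With that repair (and your intended sublemma that a realized angle symbol is bounded below by a function of the petal-to-center ratios, which follows from the monotonicity recorded in Lemma~\ref{lem:strict-inc-of-angle-sum-in-petal-directions}), your proof goes through and coincides with the paper's.
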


\begin{proof}
Set $S:=\{0,\ldots,n-1\}.$ Assume for all $m\in\{2,\ldots,n-1\}$,
that $\abs{\cal E_{m}}<\infty$. For every $m\in\{2,\ldots,n-1\}$
define $S_{m}:=\{0,\ldots,m-1\}$ and for every $D\in\cal E_{m}$
fix a monotone map $\sigma_{D}\in(0,\infty)^{S_{m}}$ so that, for
all $x\in D$, we have $2\pi\leq\alpha(x)|_{\sigma_{D}}$. For $m\in\{2,\ldots,n-1\}$
define 
\[
K_{m-2}:=\min_{D\in\cal E_{m}}\frac{\sigma_{D}(m-2)}{\sigma_{D}(m-1)}\in(0,1].
\]

Now, take any $n$-essential set $C\in\cal E_{n}$. Let $\rho\in(0,\infty)^{S}$
be any monotone map such that, for all $x\in C$, we have $\alpha(x)|_{\rho}\leq2\pi$.
By Lemma~\ref{lem:shrinking-essential-set-remains-essential}, for
every $m\in\{2,\ldots,n-1\}$, the set $C_{m}:=\{x\downarrow_{m-1}:x\in C,\ \codecenter(x)\leq m-2\}$
is $m$-essential so that $C_{m}\in\cal E_{m}$. For every $m\in\{2,\ldots,n-1\}$,
define $\rho_{m}$ as the restriction of $\rho$ to $S_{m}$. Then,
for every $y\in C_{m}$ taking $x\in C$ so that $y=x\downarrow_{m-1}$,
since $\rho_{m}$ is monotone, we obtain 
\[
\alpha(y)|_{\rho_{m}}\leq\alpha(x)|_{\rho}\leq2\pi\leq\alpha(y)|_{\sigma_{C_{m}}}.
\]
Hence, by Lemma~\ref{lem:bootstrap}, for every $m\in\{2,\ldots,n-1\}$,
we have
\[
0<K_{m-2}=\min_{D\in\cal E_{m}}\frac{\sigma_{D}(m-2)}{\sigma_{D}(m-1)}\leq\frac{\sigma_{C_{m}}(m-2)}{\sigma_{C_{m}}(m-1)}\leq\frac{\rho_{m}(m-2)}{\rho_{m}(m-1)}=\frac{\rho(m-2)}{\rho(m-1)}.
\]
Therefore, from
\[
0<K_{0}\leq\frac{\rho(0)}{\rho(1)},\quad0<K_{1}\leq\frac{\rho(1)}{\rho(2)},\qquad\ldots,\qquad0<K_{n-3}\leq\frac{\rho(n-3)}{\rho(n-2)},
\]
we surmise that
\[
0<\parenth{\prod_{j=0}^{n-3}K_{j}}\leq\frac{\rho(0)}{\rho(n-2)},
\]
with $\parenth{\prod_{j=0}^{n-3}K_{j}}\in(0,1]$. We define 
\[
\kappa(s):=\begin{cases}
\parenth{\prod_{j=0}^{n-3}K_{j}} & s=0\\
1 & s=n-2,
\end{cases}\quad(s\in\{0,n-2\}).
\]
Since $\rho$ is monotone, we have $(n-2)_{0}^{0}|_{\rho}\leq\min\{\ang(x)|_{\rho}:x\in C\}$,
and since $0<\parenth{\prod_{j=0}^{n-3}K_{j}}\leq\rho(0)/\rho(n-2)$,
we obtain 
\[
0<(n-2)_{0}^{0}|_{\kappa}\leq(n-2)_{0}^{0}|_{\rho}\leq\min\{\ang(x)|_{\rho}:x\in C\}.
\]
Therefore the length of any of the codes in $C$ can be at most 
\[
N:=\left\lfloor 2\pi/\parenth{(n-2)_{0}^{0}|_{\kappa}}\right\rfloor .
\]
It is crucial to observe that $N$ is independent of the choice of
$C\in\cal E_{n}$. The cardinality of $\cal E_{n}$ is thus bounded
by the cardinality of the powerset of the finite set 
\[
\{x\in\codes(S):\len(x)\leq N,\ \codecenter(x)\leq n-2\}.
\]
In particular $\abs{\cal E_{n}}<\infty$.
\end{proof}
\begin{lem}
\label{lem:E2-finite}The set $\cal E_{2}$ is finite.
\end{lem}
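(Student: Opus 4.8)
The plan is to reduce the finiteness of $\cal E_{2}$ to a bound on the length of the single coronal code that can appear in any $2$-essential set. Let $S = \{0,1\}$ and suppose $C \subseteq \codes(S)$ is $2$-essential. Since $C$ is a fundamental set we must have $\{\codecenter(x) : x \in C\} = \{0\}$, so every code in $C$ has center $0$. Moreover, the fundamental-set condition applied to the set $K = \{0\}$ forces some code $x \in C$ to have a petal not equal to $0$, i.e.\ a petal equal to $1$; in fact every code in $C$ has center $0$ and petals drawn from $\{0,1\}$. So $C$ is just a (nonempty) set of codes of the form $0:p_0 p_1 \ldots p_{m-1}$ with each $p_i \in \{0,1\}$.

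Next I would extract the length bound from condition~(2) of Definition~\ref{def:Essential-sets}. Let $\rho \in (0,\infty)^{S}$ be a monotone map with $\alpha(x)|_{\rho} \le 2\pi$ for all $x \in C$. Monotonicity means $\rho(0) \le \rho(1)$, and after positive rescaling (which leaves all realized angle sums invariant) I may assume $\rho(1) = 1$, so $\rho(0) \in (0,1]$. Now every angle symbol occurring in $\alpha(x)$ for $x$ of center $0$ is one of $0_0^0$, $0_0^1$, or $0_1^1$, and since $\rho(0) \le \rho(1) = 1$ one checks (directly from the $\arccos$ formula, using monotonicity of the angle in the petal radii, which is exactly the content of Lemma~\ref{lem:strict-inc-of-angle-sum-in-petal-directions}) that each such realized angle symbol is at least $0_0^0|_{\rho} = \arccos(1/2 \cdot \text{(stuff)})$; more precisely $0_0^0|_{\rho} = \pi/3 > 0$ is a fixed positive constant \emph{independent of $\rho(0)$}, since $0_0^0 = \arccos\!\bigl(\tfrac{(2\rho(0))^2 + (2\rho(0))^2 - 0}{2(2\rho(0))(2\rho(0))}\bigr) = \arccos(1/2) = \pi/3$. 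Hence each of the $\len(x) = m$ terms of $\alpha(x)|_{\rho}$ is at least $\pi/3$, so $2\pi \ge \alpha(x)|_{\rho} \ge m\pi/3$, giving $\len(x) \le 6$ for every $x \in C$.

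Therefore every $2$-essential set $C$ is a subset of the finite set $\{x \in \codes(S) : \codecenter(x) = 0,\ \len(x) \le 6\}$, and so $\cal E_{2}$ is contained in the powerset of a finite set. In particular $\abs{\cal E_{2}} < \infty$.

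I do not expect any serious obstacle here; the only point requiring a little care is the uniform lower bound on the realized angle symbols with center $0$. This is immediate because the smallest such angle is $0_0^0$ (all three petals the largest-possible relative size gives the smallest angle — this is the monotonicity of $c_b^a$ in its petals, i.e.\ the content used in Lemma~\ref{lem:strict-inc-of-angle-sum-in-petal-directions}), and $0_0^0$ realizes to the scale-invariant value $\pi/3$ regardless of the actual radii. One could alternatively invoke the explicit (folklore) computation of $\Pi_2$, but that is unnecessary; the crude bound $\len(x) \le 6$ suffices, in keeping with the paper's stated policy of using obvious rather than optimal bounds.
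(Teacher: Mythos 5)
Your proof is correct and follows essentially the same route as the paper: every code in a $2$-essential set has center $0$, the monotone realizer $\rho$ forces each realized angle symbol $0_b^a|_{\rho}$ to be at least $\pi/3$, hence $\len(x)\leq6$, and $\cal E_2$ embeds in the powerset of a finite set. One cosmetic slip: in your displayed computation of $0_0^0|_{\rho}$ the numerator should end in $-(2\rho(0))^2$ rather than $-0$ (yielding $\arccos(1/2)=\pi/3$ as you claim), but this does not affect the argument.
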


\begin{proof}
Let $C\in\cal E_{2}$. Since $C$ is fundamental, we have that $C\subseteq\{x\in\codes(\{0,1\}):\codecenter(x)=0\}$
and there exist monotone maps $\rho,\sigma\in(0,\infty)^{\{0,1\}}$
so that for all $x\in C$ 
\[
\alpha(x)|_{\rho}\leq2\pi\leq\alpha(x)|_{\sigma}.
\]
For all $a,b\in\{0,1\}$ we have $\pi/3\leq0_{b}^{a}|_{\rho}$ which
implies that the angle sum $\alpha(x)$ can have at most six terms
and hence the length of any code $x\in C$ is at most six. Therefore
the cardinality of $\cal E_{2}$ is bounded by the cardinality of
powerset of the finite set $\{x\in\codes(\{0,1\}):\len(x)\leq6,\ \codecenter(x)=0\}$.
\end{proof}
\begin{cor}
\label{cor:set-of-essential-sets-are-finite}For every $n\in\N$ with
$n\geq2$, the set $\cal E_{n}$ is finite.
\end{cor}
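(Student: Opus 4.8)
The plan is to deduce the corollary by strong induction on $n$, with essentially all of the real content already packaged into Lemma~\ref{lem:E2-finite} and Lemma~\ref{lem:Em-finite-implies-En-finite}. First I would isolate the statement to be proved by induction: for each $n\in\N$ with $n\geq 2$, the predicate $\mathbf{P}(n)$ asserting $\abs{\cal E_n}<\infty$. The base case $\mathbf{P}(2)$ is exactly Lemma~\ref{lem:E2-finite}, so nothing further is needed there.

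For the inductive step, fix $n\in\N$ with $n\geq 3$ and assume $\mathbf{P}(m)$ holds for every $m\in\{2,\ldots,n-1\}$, i.e.\ $\abs{\cal E_m}<\infty$ for all such $m$. This is precisely the hypothesis of Lemma~\ref{lem:Em-finite-implies-En-finite}, whose conclusion is $\abs{\cal E_n}<\infty$, that is, $\mathbf{P}(n)$. By the principle of strong induction, $\mathbf{P}(n)$ holds for all $n\geq 2$, which is the claim.

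The only point requiring any care is purely formal: one must check that the quantifier ranges in Lemma~\ref{lem:Em-finite-implies-En-finite} (``for all $m\in\{2,\ldots,n-1\}$'') line up with the induction hypothesis, and in particular that the step is only invoked for $n\geq 3$ so that the set $\{2,\ldots,n-1\}$ is non-empty and contained in the range where $\mathbf{P}$ is already known; the case $n=2$ is handled separately and not via the lemma. I do not expect any genuine obstacle here, since the substantive arguments---the length bound coming from The Bootstrapping Lemma in Lemma~\ref{lem:Em-finite-implies-En-finite}, and the six-petal bound in Lemma~\ref{lem:E2-finite}---have been carried out in the preceding results; the corollary is merely the assembly of these two facts through the induction scheme.
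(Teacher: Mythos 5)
Your proposal is correct and coincides exactly with the paper's own argument: the paper's proof is a one-line appeal to strong induction with Lemma~\ref{lem:E2-finite} as the base case and Lemma~\ref{lem:Em-finite-implies-En-finite} as the inductive step. Your additional check that the step is only invoked for $n\geq 3$ is a sound (if unstated in the paper) formality.
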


\begin{proof}
By strong induction and Lemmas~\ref{lem:E2-finite} and~\ref{lem:Em-finite-implies-En-finite},
for every $n\in\N$ with $n\geq2$, the set $\cal E_{n}$ is finite. 
\end{proof}
{}
\begin{thm}
\label{thm:compact-packings-have-n-essential-sets}Let $n\in\N$ with
$n\geq2$ and $S:=\{0,\ldots,n-1\}$. Let $P$ be a canonically labeled
compact $n$-packing. Then there exists a subset of $C\subseteq(\code\circ\coronas)(P)$
that is $n$-essential, i.e., $C\in\cal E_{n}$.
\end{thm}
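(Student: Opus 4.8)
The plan is to exhibit an explicit $n$-essential subset of $(\code\circ\coronas)(P)$ by taking, for each disc size $i\in\{0,\ldots,n-2\}$, a single coronal code coming from a corona in $P$ with a center of that size, chosen so that together these codes form a fundamental set; then verify condition (2) of Definition~\ref{def:Essential-sets} using the canonical realizer of $P$ itself.

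First I would recall from Theorem~\ref{thm:packings-have-fundamental-sets} that the set $C_0:=\{x\in(\code\circ\coronas)(P):\codecenter(x)\leq n-2\}$ is already a fundamental set, so the only issue is that a fundamental set must merely satisfy the conditions of Definition~\ref{def:fundamental}; taking $C:=C_0$ itself is the simplest choice, and I would argue we may take $C=C_0$ directly rather than pruning it. Condition (1) of Definition~\ref{def:Essential-sets} is then exactly Theorem~\ref{thm:packings-have-fundamental-sets}. For condition (2), let $\rho$ be the canonical realizer of $P$; since $P$ is canonically labeled we have $0<r_0<r_1<\cdots<r_{n-1}$, so $\rho$ is strictly increasing, hence monotone. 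Moreover, every code $x\in(\code\circ\coronas)(P)$ is tightly realized by $\rho$, i.e. $\alpha(x)|_\rho=2\pi$, as noted at the end of Section~\ref{subsec:Canonical-labeling}. Taking $\sigma:=\rho$, we get $\alpha(x)|_\rho\leq 2\pi\leq\alpha(x)|_\sigma$ for all $x\in C$ trivially (with equality throughout). Thus $C=C_0$ satisfies both conditions of Definition~\ref{def:Essential-sets}, so $C\in\cal E_n$, and $C\subseteq(\code\circ\coronas)(P)$ by construction.

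The key steps in order are: (i) invoke Theorem~\ref{thm:packings-have-fundamental-sets} to get that $C_0$ is fundamental; (ii) observe that the canonical realizer $\rho$ of $P$ is monotone because $P$ is canonically labeled; (iii) observe that $\rho$ tightly realizes $(\code\circ\coronas)(P)$ and in particular $C_0$, so taking $\rho=\sigma$ makes condition (2) hold with equality; (iv) conclude $C_0\in\cal E_n$. There is essentially no obstacle here: the content has already been done in Theorem~\ref{thm:packings-have-fundamental-sets} and in the remark closing Section~\ref{subsec:Canonical-labeling}; the only thing to be careful about is that "monotone" in Definition~\ref{def:Essential-sets} is satisfied by a strictly increasing map (which the canonical realizer is, by construction of the canonical labeling), and that using a single realizer for both $\rho$ and $\sigma$ is legitimate since the inequality $\alpha(x)|_\rho\leq 2\pi\leq\alpha(x)|_\sigma$ is non-strict. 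I expect the proof to be a few lines long.
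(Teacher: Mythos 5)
Your proposal is correct and follows essentially the same route as the paper: invoke Theorem~\ref{thm:packings-have-fundamental-sets} to obtain the fundamental set, then take both monotone maps in Definition~\ref{def:Essential-sets} to be the canonical realizer, so that the inequalities hold with equality. Your explicit remark that the canonical realizer is strictly increasing (hence monotone) is a small point the paper's proof leaves implicit, but otherwise the arguments coincide.
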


\begin{proof}
By Theorem~\ref{thm:packings-have-fundamental-sets}, there exists
a fundamental set $C\subseteq(\code\circ\coronas)(P)$. With $\rho$
the canonical realizer of $P$, for all $x\in C$ we have $\alpha(x)|_{\rho}=2\pi$,
so taking $\sigma:=\rho$, we obtain $\alpha(x)|_{\rho}=2\pi=\alpha(x)|_{\sigma}$
for all $x\in C$. Therefore $C\in\cal E_{n}.$
\end{proof}
Figure~\ref{fig:fundamental-set} displays an example of a $3$-essential
set determined from a canonically labeled $3$-packing.
\begin{cor}
\label{cor:main}For every $n\in\N$ with $n\geq2$, the cardinality
of the set 
\[
\Pi_{n}:=\set{\radii(P)}{\begin{array}{c}
P\text{ a compact \ensuremath{n}-packing with }\\
\max\,\radii(P)=1
\end{array}}
\]
 is at most the cardinality of $\cal E_{n}$, and therefore $\Pi_{n}$
is finite.
\end{cor}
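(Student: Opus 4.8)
The plan is to build an injection $\Phi\colon\Pi_n\to\cal E_n$ and then invoke Corollary~\ref{cor:set-of-essential-sets-are-finite}. First I would fix, for each $r\in\Pi_n$, a canonically labeled compact $n$-packing $P_r$ with $\radii(P_r)=r$ (such a packing exists by the definition of $\Pi_n$, and automatically satisfies $\max\,\radii(P_r)=1$); by Theorem~\ref{thm:compact-packings-have-n-essential-sets} I may then fix an $n$-essential set $C_r\subseteq(\code\circ\coronas)(P_r)$, and define $\Phi(r):=C_r$. The substance of the argument is the claim that $\Phi$ is injective, and --- importantly --- that its injectivity does not depend on the arbitrary choices of $P_r$ and $C_r$.

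To see this, suppose $r,r'\in\Pi_n$ satisfy $\Phi(r)=\Phi(r')=:C$. Then $C$ is $n$-essential, hence in particular a fundamental set, and $C$ is a subset of both $(\code\circ\coronas)(P_r)$ and $(\code\circ\coronas)(P_{r'})$. Let $\rho$ and $\rho'$ be the canonical realizers of $P_r$ and $P_{r'}$. Since $\max\,\radii(P_r)=\max\,\radii(P_{r'})=1$, we have $\rho(n-1)=\rho'(n-1)=1$, so both lie in $\{\tau\in(0,\infty)^S:\tau(n-1)=1\}$. Moreover, as noted at the end of Section~\ref{subsec:Canonical-labeling}, $(\code\circ\coronas)(P_r)$ is tightly realized by $\rho$; hence $\alpha(x)|_{\rho}=2\pi$ for every $x\in C$, and likewise $\alpha(x)|_{\rho'}=2\pi$ for every $x\in C$.

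Now let $\mathbf{G}\colon(0,\infty)^S\to\R^C$ be the map from Theorem~\ref{thm:fundamental-set-injective-G}, which applies because $C$ is fundamental. The previous paragraph shows $\mathbf{G}(\rho)=(2\pi)_{x\in C}=\mathbf{G}(\rho')$, and since $\mathbf{G}$ is injective on $\{\tau\in(0,\infty)^S:\tau(n-1)=1\}$ while both $\rho,\rho'$ lie there, we get $\rho=\rho'$. Therefore
\[
r=\radii(P_r)=\{\rho(i):i\in S\}=\{\rho'(i):i\in S\}=\radii(P_{r'})=r',
\]
so $\Phi$ is injective and $\abs{\Pi_n}\leq\abs{\cal E_n}$. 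Since $\cal E_n$ is finite by Corollary~\ref{cor:set-of-essential-sets-are-finite}, the set $\Pi_n$ is finite.

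I do not expect a serious obstacle here: the real work has been done in Theorem~\ref{thm:fundamental-set-injective-G} (injectivity of $\mathbf{G}$ on the slice $\tau(n-1)=1$, which forces two normalized packings sharing a fundamental set of coronal codes to have exactly the same radii), in Theorem~\ref{thm:compact-packings-have-n-essential-sets} (existence of an $n$-essential subset in every packing), and in Corollary~\ref{cor:set-of-essential-sets-are-finite} (finiteness of $\cal E_n$). The only point that deserves care in the write-up is that, although $\Phi$ is defined via choices, the bound $\abs{\Pi_n}\leq\abs{\cal E_n}$ is independent of them.
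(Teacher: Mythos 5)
Your proof is correct and uses exactly the same ingredients as the paper (Theorems~\ref{thm:compact-packings-have-n-essential-sets} and~\ref{thm:fundamental-set-injective-G} together with Corollary~\ref{cor:set-of-essential-sets-are-finite}); the paper merely runs the count in the opposite direction, observing that each $C\in\cal E_{n}$ admits at most one tight realizer on the slice $\{\tau:\tau(n-1)=1\}$ and that every packing's canonical realizer arises as such a unique tight realizer, rather than exhibiting an explicit injection $\Pi_{n}\to\cal E_{n}$. The two formulations are logically equivalent, and your attention to the choice-independence of the bound is sound.
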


\begin{proof}
Let $n\in\N$ with $n\geq2$ and set $S:=\{0,\ldots,n-1\}$. By Theorem~\ref{thm:fundamental-set-injective-G},
for every element $C$ of the finite set $\cal E_{n}$, there is at
most one (but perhaps no) element $\rho\in\{\tau\in(0,\infty)^{S}:\tau(n-1)=1\}$
so that, for all $x\in C$, we have $\alpha(x)|_{\rho}=2\pi$. 

On the other hand, let $P$ be any compact $n$-packing with $\max\,\radii(P)=1$.
By Theorem~\ref{thm:compact-packings-have-n-essential-sets}, there
exists a subset $C\subseteq(\code\circ\coronas)(P)$ that is an element
of $\cal E_{n}$ and, by Theorem~\ref{thm:packings-determine-exactly-one-realization},
the canonical realizer $\rho:S\to(0,1]$ of $P$ is an element of
$\{\tau\in(0,\infty)^{S}:\tau(n-1)=1\}$ so that, for all $x\in C$,
we have $\alpha(x)|_{\rho}=2\pi$. 

We therefore obtain $\abs{\Pi_{n}}\leq\abs{\cal E_{n}}<\infty$.
\end{proof}
The previous corollary proves Theorem~\ref{thm:main-thm}, as stated
in the introduction.
\begin{rem}
\label{rem:why-no-quantitative-bounds}For $n\in\N$, from Corollary~\ref{cor:main}
a quantitative bound for $\abs{\Pi_{n}}$ is the finite number $\abs{\cal E_{n}}$.
However, for a fixed $n\in\N$, given exact knowledge of the sets
$\cal E_{2},\ldots,\cal E_{n-1}$, Lemma~\ref{lem:Em-finite-implies-En-finite}
only places a bound on the size of the set $\cal E_{n}$, but gives
no information on the elements of $\cal E_{n}$. Specifically, Lemma~\ref{lem:Em-finite-implies-En-finite}
and its proof does not give information on the monotone maps $\rho$
and $\sigma$ that exist for every element of $\cal E_{n}$ (Definition~\ref{def:Essential-sets}).
But as seen in the proof of Lemma~\ref{lem:Em-finite-implies-En-finite}
this information is required to be able to determine a bound on the
size of the next set $\cal E_{n+1}$ in the sequence and, by extension,
a bound on the size of $\Pi_{n+1}$. For this reason, determining
a quantitative bound for $\abs{\Pi_{n}}$ for $n\in\N$ using the
ideas presented in this paper will at least require computing monotone
maps $\rho$ and $\sigma$, as in Definition~\ref{def:Essential-sets},
that exist for each element in each of the sets $\cal E_{2},\ldots,\cal E_{n-1}$.
\end{rem}

\bibliographystyle{amsalpha}
\bibliography{bibliography}

\providecommand{\bysame}{\leavevmode\hbox to3em{\hrulefill}\thinspace}
\providecommand{\MR}{\relax\ifhmode\unskip\space\fi MR }
\providecommand{\MRhref}[2]{%
  \href{http://www.ams.org/mathscinet-getitem?mr=#1}{#2}
}
\providecommand{\href}[2]{#2}
\begin{thebibliography}{FHS21}

\bibitem[CG21]{ConnellyGortler}
R.~Connelly and S.J. Gortler, \emph{Packing disks by flipping and flowing},
  Discrete Comput. Geom. \textbf{66} (2021), no.~4, 1262--1285. \MR{4333292}

\bibitem[CP19]{Connelly2021}
R.~Connelly and M.~Pierre, \emph{Maximally dense disk packings on the plane},
  http://arxiv.org/abs/1907.03652v1 (2019).

\bibitem[Fer19]{FerniqueThreeSizes}
T.~Fernique, \emph{Compact packings of space with three sizes of spheres},
  https://arxiv.org/abs/1912.02293 (2019).

\bibitem[Fer21]{FerniqueTwoSpheres2021}
\bysame, \emph{Compact packings of space with two sizes of spheres}, Discrete
  Comput. Geom. \textbf{65} (2021), no.~4, 1287--1295. \MR{4249904}

\bibitem[FHS21]{Fernique2021}
T.~Fernique, A.~Hashemi, and O.~Sizova, \emph{Compact {P}ackings of the {P}lane
  with {T}hree {S}izes of {D}iscs}, Discrete Comput. Geom. \textbf{66} (2021),
  no.~2, 613--635. \MR{4292755}

\bibitem[FT64]{Toth1964}
L.~Fejes~T\'{o}th, \emph{Regular figures}, A Pergamon Press Book, The Macmillan
  Co., New York, 1964. \MR{0165423}

\bibitem[Ken06]{Kennedy2006}
T.~Kennedy, \emph{Compact packings of the plane with two sizes of discs},
  Discrete Comput. Geom. \textbf{35} (2006), no.~2, 255--267. \MR{2195054}

\bibitem[LH93]{Likos1993}
C.~N. Likos and C.~L. Henley, \emph{Complex alloy phases for binary hard-disc
  mixtures}, Philosophical Magazine B \textbf{68} (1993), no.~1, 85--113.

\bibitem[Mes20]{Messerschmidt2020}
Miek Messerschmidt, \emph{On compact packings of the plane with circles of
  three radii}, Comput. Geom. \textbf{86} (2020).

\bibitem[SI21]{oeis}
N.J.~A. Sloane and The OEIS~Foundation Inc., \emph{The on-line encyclopedia of
  integer sequences}, 2021.

\end{thebibliography}

\end{document}